\newcommand{\R}{\mathbb{R}}
\newcommand{\C}{\mathbb{C}}
\newcommand{\N}{\mathbb{N}}
\newcommand{\Z}{\mathbb{Z}}
\newcommand{\SH}{S_\mathcal{H}}
\newcommand{\BH}{\mathcal{B}(\mathcal{H})}
\newcommand{\tr}{\operatorname{tr}}
\newcommand{\re}{\operatorname{Re}}
\newcommand{\im}{\operatorname{Im}}
\newcommand{\conv}{\operatorname{conv}}
\newcommand{\inner}[1]{\left\langle #1 \right\rangle}
\newcommand{\ext}{\operatorname{ext}}
\newcommand{\cl}[1]{\overline{#1}}
\newcommand{\spn}{\operatorname{span}}
\theoremstyle{plain}
\newtheorem{theorem}{Theorem}[section]
\newtheorem{corollary}[theorem]{Corollary}
\newtheorem{lemma}[theorem]{Lemma}
\newtheorem{proposition}[theorem]{Proposition}
\numberwithin{equation}{section}
\theoremstyle{definition}
\newtheorem{example}[theorem]{Example}
\theoremstyle{remark}
\newtheorem{remark}[theorem]{Remark}
\begin{document}
%\title[Weak continuity on the boundary of the numerical range revisited]{Weak continuity on the boundary of the numerical range revisited}
\title[Inverse continuity for Hilbert space operators]{Inverse continuity of the numerical range map for Hilbert space operators}
\author[B. Lins, I. M. Spitkovsky]{Brian Lins$^*$, Ilya M. Spitkovsky$^\dagger$}
\date{}
\address{Brian Lins, Hampden-Sydney College}
\email{blins@hsc.edu}
\address{Ilya M. Spitkovsky, New York University, Abu Dhabi}
\email{ims2@nyu.edu, imspitkovsky@gmail.com}

\thanks{$^*$Corresponding author.}
\thanks{$^\dagger$Supported in part by the Faculty Research funding from the Division of Science and Mathematics, New York University Abu Dhabi.}
%\address{Ilya Spitkovsky, College of William \& Mary and New York University, Abu Dhabi}
%\email{ilya@math.wm.edu, imspitkovsky@gmail.com, ims2@nyu.edu}
\subjclass[2010]{Primary 47A12; Secondary 47A55}
\keywords{Numerical range; inverse continuity; weak continuity}
%\thanks{This work was partially supported by NSF grant DMS-0751964}

\begin{abstract}
We describe continuity properties of the multivalued inverse of the numerical range map $f_A:x \mapsto \inner{ Ax, x }$ associated with a linear operator $A$ defined on a complex Hilbert space $\mathcal{H}$.  We prove in particular that $f_A^{-1}$ is strongly continuous at all points of the interior of the numerical range $W(A)$. We give examples where strong and weak continuity fail on the boundary and address special cases such as normal and compact operators.   
\end{abstract}

\maketitle

\section{Introduction}

Let $\mathcal{H}$ be a complex Hilbert space with inner product $\inner{ \cdot, \cdot }$ and norm $\|\cdot\| := \sqrt{\inner{x,x}}$.  Let $\BH$ denote the set of all bounded linear operators from $\mathcal{H}$ into $\mathcal{H}$ and let $\SH := \{x \in \mathcal{H} : \|x\| = 1 \}$ denote the unit sphere in $\mathcal{H}$.  For any operator $A \in \BH$, the \textit{numerical range map} of $A$ is the map $f_A:\SH \rightarrow \C$ such that $f_A(x) := \inner{ Ax,x }$. The \emph{numerical range} of $A$, denoted $W(A)$, is the image of $\SH$ under $f_A$, $W(A) := \{ f_A(x) : x \in \SH \}$. Throughout the paper, we use $\cl{S}$, $\partial S$, $\conv S$, and $\ext S$ to represent the closure, boundary, convex hull, and extreme points of a set $S$, respectively.  

%The numerical range of an operator has many useful properties. In particular, it is a convex set whose closure contains the spectrum of the operator.  \hl{Connect with quantum information theory,... Stephan's result...}

In \cite{CJKLS2}, two notions of continuity were defined for the set-valued inverse numerical range map $f_A^{-1}$. We say that $f_A^{-1}$ is \emph{strongly continuous} at $z \in W(A)$ when the direct mapping $f_A$ is open in the relative topology of $W(A)$ at all pre-images $x \in f_A^{-1}(z)$. If there is at least one pre-image $x \in f_A^{-1}(z)$ for which $f_A$ is open, then $f_A^{-1}$ is \emph{weakly continuous} at $z$. Strong continuity is sometimes just called continuity in the literature on multivalued functions. The definition of weak continuity can be traced back to \cite{BraHe94}.  

S. Weis observed \cite{Weis16} that the continuity of certain maximal entropy inference maps on a quantum state space is equivalent to strong inverse continuity of a related numerical range map.  In this paper, we aim to extend the results of \cite{CJKLS2} and \cite{LLS13} to the infinite dimensional setting.  In the following section, we recall some important facts about numerical ranges and perturbation theory of operators.  In section 3, we prove that the inverse numerical range map is strongly continuous on $W(A)$ except, possibly, at certain extreme points on the boundary. We also give necessary and sufficient conditions for strong and weak continuity to hold on the numerical range of a normal operator. In section 4, we characterize strong and weak continuity for other points on the boundary of $W(A)$ under certain additional assumptions.  We conclude in section 5 with several examples.

\color{black}
%\color{gray}
%You might want to review the following facts/definitions for set-valued functions. 
%\begin{enumerate}
%\item $f_A^{-1}$ is upper hemicontinuous since $f_A$ is continuous.
%\item Lower hemicontinuity is equivalent to what we call strong continuity. 
%\item Other papers just use the term continuity for what we call strong continuity.
%\item Weak continuity was introduced by BrattkaHertling94.   
%\end{enumerate}
%\color{black}

\section{Preliminaries}

A linear operator $A$ defined on $\mathcal{H}$ has real and imaginary parts: $\re A := \frac{1}{2}(A+A^*)$ and $\im A := \frac{1}{2i}(A-A^*)$. For any $0 \le \theta < 2\pi$, the operator $\re (e^{-i \theta} A) = \cos \theta \re A + \sin \theta \im A$ is self-adjoint. The following proposition collects some well known results from perturbation theory about analytic self-adjoint operator-valued functions.  See, e.g., \cite[Chapter VII, section 3]{Kato}.  

\begin{proposition} \label{prop:Kato}
Let $A \in \mathcal{B}(H)$.  The operator valued function $\re(e^{-i\theta} A)$ is an analytic function of $\theta \in \R$, and its values are self-adjoint operators.  For any fixed $\theta_0$, each isolated eigenvalue $\lambda$ of $\re(e^{-i\theta_0} A)$ with finite multiplicity splits into one or several analytic functions $\lambda(\theta)$ that correspond to eigenvalues of $\re(e^{-i \theta} A)$ on an interval around $\theta_0$.  The corresponding spectral projections are also analytic functions of $\theta$.  
\end{proposition}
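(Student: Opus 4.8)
The plan is to prove this as a direct application of analytic perturbation theory (Kato) to the specific one-parameter family $T(\theta) := \re(e^{-i\theta}A)$. The entire proposition is really a collection of standard consequences of the Kato–Rellich theory for self-adjoint families, so my task is mostly to verify that $T(\theta)$ satisfies the hypotheses needed to invoke that theory, and then to cite the appropriate results.

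First I would establish analyticity and self-adjointness, which is the easy part. Writing $T(\theta) = \cos\theta\,\re A + \sin\theta\,\im A$, I note that $\re A$ and $\im A$ are bounded self-adjoint operators, and $\cos\theta,\sin\theta$ are real-analytic; hence $T(\theta)$ is a bounded-operator-valued function that is analytic in $\theta$ (indeed it extends to an entire function of a complex variable $\theta$), with self-adjoint values for real $\theta$. Thus $\{T(\theta)\}$ is a self-adjoint holomorphic family of type (A) in Kato's terminology (in fact all operators have the same domain $\mathcal{H}$, so the domain issues that complicate the general theory do not arise here).

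Next I would address the eigenvalue splitting. Fix $\theta_0$ and let $\lambda$ be an isolated eigenvalue of $T(\theta_0)$ of finite multiplicity $m$. Since $\lambda$ is isolated, I can choose a small circle $\Gamma$ in the complex plane enclosing $\lambda$ and no other point of the spectrum of $T(\theta_0)$. By the analyticity of the resolvent $(T(\theta)-\zeta)^{-1}$ jointly in $\theta$ and $\zeta$ away from the spectrum, for $\theta$ near $\theta_0$ the contour $\Gamma$ still separates the spectrum, and the Riesz projection $P(\theta) = \frac{1}{2\pi i}\oint_\Gamma (\zeta - T(\theta))^{-1}\,d\zeta$ is a well-defined analytic projection-valued function whose range has constant dimension $m$. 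This is precisely the total projection onto the group of eigenvalues emanating from $\lambda$. The Kato–Rellich theorem for a self-adjoint analytic family then guarantees that these $m$ eigenvalues can be represented as $m$ functions $\lambda_1(\theta),\dots,\lambda_m(\theta)$ that are analytic in a real neighborhood of $\theta_0$ (self-adjointness is what rules out the branch-point behavior that can occur for non-self-adjoint families), together with analytic eigenprojections.

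The main obstacle, and the point I would be most careful about, is the infinite-dimensional setting: I must not assume that the whole spectrum decomposes nicely, only the isolated finite-multiplicity part. The argument above is purely local around $\lambda$ and uses only the existence of the separating contour $\Gamma$, so it applies verbatim regardless of $\dim\mathcal{H}$; the rest of the spectrum is handled by the complementary projection and never enters. I would therefore emphasize that the construction is local in the spectral variable and invoke the cited results of Kato, Chapter VII §3, for the conclusions about analyticity of both $\lambda(\theta)$ and the spectral projections, rather than reprove them.
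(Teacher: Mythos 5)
Your proposal is correct and takes essentially the same route as the paper: the paper offers no proof at all, simply citing Kato (Chapter VII, \S 3) for these standard facts, and your argument is exactly the verification that the bounded self-adjoint family $\re(e^{-i\theta}A)=\cos\theta\,\re A+\sin\theta\,\im A$ satisfies the hypotheses of that theory, followed by an invocation of the Riesz-projection and Kato--Rellich results. The only substantive content you add beyond the citation --- checking that the family is an entire, bounded, self-adjoint holomorphic family and that the argument is local in the spectral variable, so the infinite-dimensional essential spectrum never interferes --- is sound and is precisely what the paper leaves implicit.
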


The spectrum $\sigma(T)$ of a self-adjoint operator $T \in \BH$ can be divided into the \emph{discrete spectrum} which consists of the isolated eigenvalues in $\sigma(T)$ with finite multiplicity and the \emph{essential spectrum} which is everything else in $\sigma(T)$ \cite[Section VII.3]{RS80}.  The analytic eigenvalue functions $\lambda(\theta)$ in Proposition \ref{prop:Kato} take values in the discrete spectrum of $\re(e^{-i\theta}A)$. Both the eigenvalue functions $\lambda(\theta)$ and the corresponding spectral projections $P(\theta)$ can be extended analytically to an interval in $\R$ as long as $\lambda(\theta)$ does not intersect the essential spectrum of $\re(e^{-i \theta} A)$ for any $\theta$ in that interval \cite[Cht. VII, section 3.2]{Kato}.  When $A$ is compact, this means that an eigenvalue function $\lambda(\theta)$ can be extended analytically as long as $\lambda(\theta) \ne 0$.  When we refer to an analytic eigenvalue function $\lambda(\theta)$ of $\re(e^{-i \theta} A)$ we will assume implicitly that $\lambda(\theta)$ is always part of the discrete spectrum on its domain.

For each analytic eigenvalue function $\lambda(\theta)$ with corresponding spectral projection $P(\theta)$, we can select an analytic path $\varphi(\theta)$ in $\mathcal{H}$ such that $P(\theta) \varphi(\theta) \ne 0$ on the interval where $P(\theta)$ is analytic. By scaling, we can construct a real analytic family of unit eigenvectors
\begin{equation} \label{eq:evectors}
x(\theta) := \frac{P(\theta) \varphi(\theta)}{\| P(\theta) \varphi(\theta) \|}
\end{equation} 
corresponding to the eigenvalues $\lambda(\theta)$.  The composition $f_A(x(\theta))$ parametrizes a real analytic curve that is contained in the numerical range of $A$. Following \cite{JAG98}, we will refer to such curves as  the \emph{critical curves of $A$}.  Below we derive a well-known expression for the critical curves in terms of the analytic eigenvalue functions of $\re(e^{-i \theta} A)$. 
\begin{align*}
f_A(x(\theta)) &= \inner{ Ax,x } \\
&= e^{i\theta} \inner{ e^{-i\theta} A x,  x} \\
&= e^{i\theta} \left(\inner{ \re(e^{-i\theta} A) x, x} + i \inner{ \im(e^{-i\theta} A) x, x}  \right). 
\end{align*}
Note that $ \frac{d}{d\theta} \re(e^{-i \theta} A) = \im(e^{-i \theta} A)$. Also $\inner{x(\theta), x'(\theta)} = 0$ for all $\theta$, since $\|x(\theta)\|= 1$ identically. Therefore,
\begin{align*}
\lambda'(\theta) &= \tfrac{d}{d\theta} \inner{ \re(e^{-i \theta} A) x(\theta), x(\theta) } \\
&= \inner{ \re(e^{-i \theta} A) x, x'} + \inner{ \im(e^{-i \theta} A) x,x} +\inner{\re(e^{-i \theta} A) x',x} \\
&= 2 \lambda(\theta) \inner{ x, x'} + \inner{ \im(e^{-i \theta} A) x,x } \\
&= \inner{ \im(e^{-i \theta} A) x,x} 
\end{align*} 
Combining these equations, we have
\begin{equation} \label{eq:critcurve}
f_A(x(\theta)) = e^{i\theta} \left( \lambda(\theta) + i\lambda'(\theta) \right). 
\end{equation}
If $\lambda(\theta)$ is the maximum (or minimum) eigenvalue of $\re(e^{-i \theta}A)$ for some $\theta$, then the corresponding point on the associated critical curve will be on the boundary of $W(A)$.  The following lemma gives a useful description of the boundary of $W(A)$.  These results are well known, so we won't prove them.  See \cite{Nar} for details.  
\begin{lemma} \label{lem:supportlines}
Let $A \in \BH$ and let $\mu(\theta)$ denote the maximum of the spectrum of $\re(e^{-i \theta}A)$ for all $\theta \in \R$. Then, 
$$\cl{W(A)} = \bigcap_{0 \le \theta < 2\pi} \{z \in \C : \re(e^{-i \theta} z) \le \mu(\theta)\}.$$
Any $z \in \partial W(A)$ lies on a tangent line 
\begin{equation} \label{eq:supportline}
L_\theta := \{z \in \C : \re(e^{-i \theta} z) = \mu(\theta)\}
\end{equation}
for some $\theta \in \R$. For $x \in \SH$, $f_A(x) \in L_\theta$ if and only if $x$ is an eigenvector of $\re(e^{-i \theta}A)$ corresponding to the eigenvalue $\mu(\theta)$. 
\end{lemma}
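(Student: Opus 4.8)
The plan is to reduce the whole statement to the identity
\begin{equation*}
\re\bigl(e^{-i\theta} f_A(x)\bigr) = \inner{\re(e^{-i\theta}A)\,x, x}, \qquad x \in \SH,
\end{equation*}
which is an instance of the elementary relation $\re\inner{Bx,x} = \inner{(\re B)x,x}$ (take $B = e^{-i\theta}A$). Taking the supremum over $x\in\SH$ and using the well-known fact that $\sup_{x\in\SH}\inner{Tx,x} = \max\sigma(T)$ for bounded self-adjoint $T$, the right-hand side becomes exactly $\mu(\theta)$. Hence $\re(e^{-i\theta}z) \le \mu(\theta)$ for every $z\in W(A)$ and every $\theta$, so $W(A)$ — and therefore its closure, the displayed intersection being closed — is contained in $\bigcap_\theta\{z:\re(e^{-i\theta}z)\le\mu(\theta)\}$.

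For the reverse inclusion I would invoke the Toeplitz--Hausdorff theorem: $W(A)$, and therefore $\cl{W(A)}$, is convex. A closed convex subset of $\C\cong\R^2$ equals the intersection of all the closed half-planes containing it, and the computation above identifies its support function in direction $e^{i\theta}$ as precisely $\mu(\theta)$. Hence no point outside $\cl{W(A)}$ can survive the intersection of the half-planes $\{z:\re(e^{-i\theta}z)\le\mu(\theta)\}$, which yields the asserted equality. The tangent-line claim is then the supporting-hyperplane theorem: since $W(A)$ is convex one has $\partial W(A)\subseteq\partial\cl{W(A)}$, and through each boundary point of the closed convex set $\cl{W(A)}$ there passes a supporting line; writing its outward unit normal as $e^{i\theta}$, the support-function value in that direction is $\mu(\theta)$, so the line is exactly $L_\theta$ and contains $z$.

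For the eigenvector characterization, fix $\theta$, put $T = \re(e^{-i\theta}A)$ and $S := \mu(\theta)I - T$, which is a positive operator by the definition of $\mu(\theta)$. For $x\in\SH$ the first display gives $f_A(x)\in L_\theta \iff \inner{Tx,x} = \mu(\theta) \iff \inner{Sx,x} = 0$. Since $\inner{Sx,x} = \norm{S^{1/2}x}^2$, this vanishes iff $S^{1/2}x = 0$, equivalently $Sx = 0$, i.e.\ $Tx = \mu(\theta)x$, which is exactly the statement that $x$ is an eigenvector of $\re(e^{-i\theta}A)$ for the eigenvalue $\mu(\theta)$; note that this equivalence automatically handles the case where $\mu(\theta)$ is not an eigenvalue, both sides then being vacuously false. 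The only non-elementary inputs are the two well-known facts flagged above — convexity of the numerical range and $\sup W(T)=\max\sigma(T)$ for self-adjoint $T$ — together with the convex-geometry duality; accordingly, the main point requiring care is assembling these ingredients correctly rather than any single delicate estimate, the subtlest being the positivity step $\inner{Sx,x}=0\Rightarrow Sx=0$, valid precisely because $S\ge 0$.
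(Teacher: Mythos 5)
The paper itself contains no proof of this lemma: it is explicitly dismissed as well known, with a pointer to Narcowich \cite{Nar}, so there is no internal argument to compare yours against. Your proof is correct and self-contained, and it is the standard route to these facts. The inclusion $\cl{W(A)} \subseteq \bigcap_{\theta} \{z \in \C : \re(e^{-i\theta}z) \le \mu(\theta)\}$ follows, as you say, from the identity $\re \inner{e^{-i\theta}Ax,x} = \inner{\re(e^{-i\theta}A)x,x}$ together with $\sup_{x \in \SH} \inner{Tx,x} = \max \sigma(T)$ for bounded self-adjoint $T$; the reverse inclusion uses exactly the two ingredients you flag, namely Toeplitz--Hausdorff convexity and the separation-theorem fact that a closed convex planar set is the intersection of the minimal closed half-planes containing it, whose support function in direction $e^{i\theta}$ you have correctly identified as $\mu(\theta)$. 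The supporting-line claim is correctly reduced to $\partial W(A) \subseteq \partial \cl{W(A)}$ (valid for convex sets) plus the supporting hyperplane theorem, and the identification of the support value as $\mu(\theta)$ is what forces the supporting line through $z$ to be exactly $L_\theta$ rather than some other parallel line. Finally, the eigenvector equivalence via $S := \mu(\theta)I - T \ge 0$ and $\inner{Sx,x} = \norm{S^{1/2}x}^2$ is exactly right, and you are correct that positivity of $S$ is the point that upgrades $\inner{Sx,x} = 0$ to $Sx = 0$; this argument also gracefully handles the genuinely infinite-dimensional phenomenon where $\mu(\theta)$ belongs to the essential spectrum but is not an eigenvalue, in which case both sides of the equivalence fail for every $x \in \SH$, i.e.\ $L_\theta$ meets $\cl{W(A)}$ but contains no point of $W(A)$ --- consistent with the lemma, whose last sentence concerns only points of $W(A)$.
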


%\color{gray}
%\begin{lemma} \label{lem:extpoints}
%Let $A \in \BH$. For each $z \in \ext W(A)$, there is closed subspace $V(z) \subset \mathcal{H}$  such that $f_A^{-1}(z) = V(z) \cap \SH$.  
%\end{lemma}
%\begin{proof}
%Since $z \in \partial W(A)$, there is an angle $\theta$ such that $z$ is contained in the line segment $L_\theta$ given by \eqref{eq:supportline}.  If $z$ is the unique intersection of $L_\theta$ with $W(A)$, then $V(z)$ is the eigenspace corresponding to the maximal eigenvalue of $\re(e^{-i \theta} A)$.  
%
%If $L_\theta \cap W(A)$ is a non-trivial line segment, then let $P(\theta)$ denote the spectral projection onto the eigenspace corresponding to the maximal eigenvalue of $\re(e^{-i \theta} A)$.  The compression of $A$ onto the range of $P(\theta)$ is a normal operator \hl{reference?} with a numerical range that is precisely the flat portion $L_\theta \cap W(A)$.  Any extreme point of this numerical range is a normal eigenvalue of the compression, and therefore has a spectral subspace which contains all pre-images of the extreme point under $f_A$.  
%\end{proof}
%
%\color{black}

\section{General results}

We begin this section with a geometric lemma about spherical caps which is essentially the same as \cite[Lemma 3]{CJKLS2}. 
\begin{lemma} \label{lem:sphericalCap}
Let $S$ be the surface of a sphere in the Euclidean space $(\R^3, \| \cdot \|)$ and let $x
\in S$.  For any $\epsilon >0$, let $C := \{y \in S: \|x-y\| <
\epsilon \}$.  If $T: \R^3 \rightarrow \R^2$ is a linear
transformation, then $T(C)$ is convex and there is a $\delta > 0$ such that $\delta T(S) + (1-\delta) T(x) \subset T(C)$.
\end{lemma}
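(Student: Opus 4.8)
The plan is to reduce to a normalized configuration and then treat the two assertions separately. First I would absorb an affine change of variables into $T$ so as to assume that $S$ is the unit sphere $S^2$ centered at the origin and that $x = (0,0,1)$ is its north pole. Writing the given sphere as $c + r S^2$ and choosing a rotation $R$ with $R\big((x-c)/r\big) = (0,0,1)$, the standardized map $\tilde T(u) := T(c + rR^{-1}u)$ differs from $T$ only through the fixed invertible affine map $\Phi(p) := Tc + rp$ of the target $\R^2$ (here $r>0$). Since $\Phi$ carries $\tilde T(S^2),\ \tilde T(e_3),\ \tilde T(\tilde C)$ to $T(S),\ T(x),\ T(C)$ and preserves both convexity and affine combinations, it suffices to work with $S = S^2$, $x = (0,0,1)$, and the rescaled chord radius (which I again call $\epsilon$). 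In these coordinates $\norm{x-y}^2 = 2 - 2y_3$, so the cap becomes $C = \{y \in S^2 : y_3 > h\}$ with $h = 1 - \epsilon^2/2$; I may assume $h \in (-1,1)$, since $h \le -1$ makes $C$ all of $S^2$ (up to one point) and the claim holds trivially with $\delta = 1$. Note that $\ker T \neq \{0\}$, as $T$ maps $\R^3$ into $\R^2$.

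For convexity I would use the criterion that a planar set is convex exactly when its intersection with every line is an interval. Fix a line $\ell \subset \R^2$. If $T$ has rank $2$, then $\Pi := T^{-1}(\ell)$ is an affine plane and $T(C) \cap \ell = T(C \cap \Pi)$. The intersection $S^2 \cap \Pi$ is a circle, a point, or empty, and the open half-space $\{y_3 > h\}$ meets it in a connected set (an open arc generically), so $C \cap \Pi$ is connected and its continuous image $T(C)\cap \ell$ is a connected subset of $\ell$, hence an interval. If $T$ has rank at most $1$, then $T(C)$ is the continuous image of the connected cap $C$ lying in a line or a point, where connectedness already forces convexity.

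For the inclusion I would introduce the solid cap $K = \{y : \norm{y} \le 1,\ y_3 \ge h\}$, a compact convex body with $\on{int} K = \{\norm{y} < 1,\ y_3 > h\}$, and first establish the covering fact $T(\on{int} K) \subseteq T(C)$. Fixing a nonzero $v \in \ker T$ and $w \in \on{int} K$, the entire line $w + \R v$ is sent by $T$ to the single point $Tw$; since $\norm{w} < 1$, this line meets $S^2$ in two points, and a short sign analysis of the height $w_3 + tv_3$ (both when $v_3 = 0$ and $v_3 \neq 0$) shows that the appropriate one lies strictly above $y_3 = h$, hence in $C$, so $Tw \in T(C)$. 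Finally I would note that $\delta T(S) + (1-\delta)T(x) = T(S_\delta)$ where $S_\delta := \delta S + (1-\delta)x$ is the sphere of radius $\delta$ internally tangent to $S^2$ at $x$; computing $\norm{y}^2$ and $y_3$ for $y = (1-\delta)x + \delta u$ shows that whenever $0 < \delta < \epsilon^2/4$ one has $S_\delta \setminus \{x\} \subset \on{int} K$, while $x \in C$, so $T(S_\delta) \subseteq T(C)$, as required. I expect the covering fact $T(\on{int} K) \subseteq T(C)$ — that every interior point of the solid cap is already attained on the curved surface $C$ — to be the main obstacle, since it is precisely where the position of the kernel direction $v$ relative to the cutting plane $\{y_3 = h\}$ must be controlled.
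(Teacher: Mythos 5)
Your proposal is correct in the main case $h\in(-1,1)$, and its inclusion half runs on the same engine as the paper's proof: push a point of the solid cap along a nonzero direction $v\in\ker T$ until the line hits the sphere at a point lying inside the cap. The paper packages this more efficiently. It observes that $C=S\cap H$ for an open half-space $H$, hence $\conv C=H\cap\conv S$, and then the kernel-push gives the single identity $T(C)=T(\conv C)$; from this, convexity is immediate (a linear image of a convex set), and the $\delta$-inclusion reduces to the observation that $\delta S+(1-\delta)x\subset\conv C$ for small $\delta$, since $x$ lies in the open set $H$. Your convexity argument is genuinely different --- the line-section criterion together with connectedness of the intersection of a circle with an open half-plane --- and it is valid, but it is the longer road, and it forces the rank case analysis that the paper's argument never needs. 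What your route buys in exchange is an explicit constant: any $\delta<\epsilon^2/4$ works (after normalization), whereas the paper's choice of $\delta$ is implicit.

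One step of yours is false as written: the disposal of the edge case $h\le-1$. When $h=-1$ (i.e.\ $\epsilon=2$ after rescaling), $C=S^2\setminus\{-e_3\}$, and the claim that ``$\delta=1$ works trivially'' fails. Take $T(y_1,y_2,y_3)=(y_1,y_3)$, whose kernel $\spn\{e_2\}$ is tangent to $S^2$ at $-e_3$; then $-e_3$ is the \emph{unique} preimage of $(0,-1)$, so $T(S^2)$ is the closed unit disk while $T(C)$ omits the boundary point $(0,-1)$, and $T(S)\not\subset T(C)$. The slip is harmless because your main argument already covers $h=-1$ verbatim: the covering fact only uses $\norm{w}<1\Rightarrow w_3>-1=h$, and the condition $\delta<\epsilon^2/4=1$ still yields valid choices of $\delta$. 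So fold $h=-1$ into the main case and reserve the trivial $\delta=1$ remark for $h<-1$, where $C$ is all of $S^2$; alternatively, note that the statement is monotone in $\epsilon$ (enlarging $C$ only helps), so the case $\epsilon\ge 2$ follows from the case of any $\epsilon'<2$.
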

\begin{proof}
Observe that $C$ is the intersection of $S$ with an open half-space $H$. Therefore $\conv C = H \cap \conv S$.  Suppose $y \in \conv C$.  Choose $v \ne 0$ in the null space of $T$, and consider the line $y + tv, t\in \R$. %\st{Since the complement of $H$ is convex,} 
At least one of the points where this line intersects $S$ will be in $H$.  Therefore there is a point $z \in C$ such that $T(z) = T(y)$ and so $T(C) = T(\conv C)$ which proves that $T(C)$ is convex.  As the center of the spherical cap $C$, $x$ is in the open set $H$. We may chose $\delta > 0$ small enough so that $\delta S + (1-\delta) x \subset \conv C$, so $\delta T(S) + (1-\delta)T(x) \subset T(C)$.  
\end{proof}

The following proposition generalizes \cite[Theorem 2]{CJKLS2} and \cite[Lemma 2.3]{LLS131} from the finite dimensional setting. Part of this result can be thought of as a generalization of the Toeplitz-Hausdorff Theorem.  Where the Toeplitz-Hausdorff Theorem guarantees that the image of $\SH$ under the numerical range map is a convex set, the proposition below says that neighborhoods in $\SH$ also have convex images. %It also parallels Lemma \ref{lem:sphericalCap}, suggesting that neighborhoods in the unit sphere of a Hilbert space behave much like spherical caps under the action of the numerical range map.
\begin{proposition} \label{prop:scaling}
Suppose that $A \in \BH$ and $z = f_A(x)$ where $x \in \SH$. Fix $\epsilon > 0$, and let $U = \{y \in \SH : \|y -x \| < \epsilon \}$ be the $\epsilon$-neighborhood around $x$ in $\SH$. Then $f_A(U)$ is convex, and there is a constant $\delta > 0$ such that $\delta W(A) + (1-\delta) z \subseteq f_A(U)$.   
\end{proposition}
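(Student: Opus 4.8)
The plan is to reduce everything to the two-dimensional situation already handled by Lemma~\ref{lem:sphericalCap}. First I would rewrite the neighborhood in linear form: for $y \in \SH$ one has $\norm{y-x}^2 = 2 - 2\re\inner{y,x}$, so $U = \{y \in \SH : \re\inner{y,x} > 1 - \epsilon^2/2\}$, the intersection of $\SH$ with an open real half-space. (One may assume $1-\epsilon^2/2 \in (0,1)$; if $1-\epsilon^2/2 \le 0$ the caps below are all of $S$ and both claims reduce to the convexity of the elliptical numerical range of a $2\times 2$ matrix.) For any two-dimensional complex subspace $V \subseteq \mathcal{H}$, writing $P_V$ for the orthogonal projection onto $V$ and $B := P_V A|_V$ for the compression, one checks $f_A(y) = \inner{By,y}$ for $y$ in the unit sphere $S_V$ of $V$, so that $f_A(S_V) = W(B) \subseteq W(A)$ and $f_A(U \cap S_V) \subseteq f_A(U)$.

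Next I would pass to the Bloch-sphere picture of a two-dimensional space. Identifying each unit vector $y \in S_V$ with the rank-one projection $\rho = yy^*$ realizes the pure states of $V$ as a Euclidean $2$-sphere $S \subseteq \R^3$, and on $S$ the map $f_A$ becomes the restriction of the affine map $\rho \mapsto \tr(B\rho)$; call this restriction $T$, so that $T(S) = W(B)$. The essential point is that $U \cap S_V$ corresponds to an honest spherical cap of $S$: since $\max_\alpha \re\inner{e^{i\alpha}y,x} = \abs{\inner{y,x}}$ and $\abs{\inner{y,x}}^2 = \tr(\rho\, w w^*)$ with $w := P_V x$, a state $\rho$ has a representative in $U$ exactly when $\tr(\rho\, w w^*) > (1-\epsilon^2/2)^2$, which is the intersection of $S$ with a half-space, hence a cap $C$ centered at the state $w w^*/\norm{w}^2$. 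Applying Lemma~\ref{lem:sphericalCap} to the linear part of $T$ (the constant term of the affine map is absorbed by a translation of the target, which preserves both convexity and the scaling relation) then gives that $T(C) = f_A(U \cap S_V)$ is convex and that $\delta\, T(S) + (1-\delta)\, T(c) \subseteq T(C)$ for a suitable $\delta > 0$, where $c$ denotes the center of $C$.

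Convexity of $f_A(U)$ now follows immediately. Given $z_1,z_2 \in f_A(U)$, pick preimages $y_1,y_2 \in U$ and take $V = \spn\{y_1,y_2\}$ (if $\dim V = 1$ then $z_1 = z_2$ and there is nothing to prove). Here $\norm{P_V x} \ge \abs{\inner{x,y_1}} > 1-\epsilon^2/2 > 0$, so $w \ne 0$ and the cap $C$ is defined; moreover $\rho_{y_i} := y_iy_i^*$ lies in $C$ because $\tr(\rho_{y_i}\, w w^*) = \abs{\inner{y_i,x}}^2 > (1-\epsilon^2/2)^2$. Thus $z_1,z_2 \in T(C)$, and convexity of $T(C)$ places the whole segment $[z_1,z_2]$ inside $T(C) \subseteq f_A(U)$.

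For the scaling statement I would instead take, for each $w = f_A(u) \in W(A)$, the subspace $V_u := \spn\{x,u\}$, which contains $x$; now the cap $C$ is centered at $\rho_x = xx^*$, and $T(\rho_x) = \inner{Ax,x} = z$. The crucial observation that lets a single $\delta$ serve for the entire, possibly infinite-dimensional, $W(A)$ is that the cap $C \subseteq S$ always has the same angular radius: in an orthonormal basis $\{x,e_u\}$ of $V_u$ its defining inequality reads $\cos^2(\phi/2) > (1-\epsilon^2/2)^2$, where $\phi$ is the polar angle measured from the pole $\rho_x$, so the radius depends only on $\epsilon$ and not on $u$. Consequently the purely geometric choice of $\delta$ in Lemma~\ref{lem:sphericalCap}, which is made at the level of the fixed sphere and cap before $T$ is applied, can be taken uniform in $u$, and $\delta\, f_A(S_{V_u}) + (1-\delta) z \subseteq f_A(U \cap S_{V_u}) \subseteq f_A(U)$ for every $u$. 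Since $w = f_A(u) \in f_A(S_{V_u})$, this yields $\delta w + (1-\delta) z \in f_A(U)$ for all $w \in W(A)$, that is, $\delta W(A) + (1-\delta) z \subseteq f_A(U)$. The main obstacle is exactly this uniformity of $\delta$; it is overcome not by a compactness argument but by the fact that the relevant cap is isometric for every two-dimensional subspace through $x$, so the geometric constant coming from the spherical-cap lemma does not degenerate.
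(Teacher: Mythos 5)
Your proposal is correct and takes essentially the same approach as the paper's proof: compress to two-dimensional subspaces, pass to rank-one projections so that $U$ corresponds to a spherical cap, apply Lemma \ref{lem:sphericalCap}, and obtain a uniform $\delta$ from the fact that the cap's angular radius depends only on $\epsilon$ when the subspace contains $x$. The only cosmetic difference is that you characterize the cap by the half-space condition $\tr(\rho\, ww^*) > (1-\epsilon^2/2)^2$ with $w = P_V x$, whereas the paper verifies directly that it is a metric ball about $xx^*$ (which it notes holds even when $x \notin V$).
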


\begin{proof}

Let $V$ be any two dimensional complex subspace of $\mathcal{H}$. By choosing an orthonormal basis for $V$, we may identify $V$ with $\C^2$, and $\mathcal{B}(V)$ with the set of 2-by-2 matrices $M_2(\C)$. Recall that $M_2(\C)$ has an inner product $\inner{X,Y} = \tr(Y^*X)$ and corresponding norm $\|X\| = \sqrt{\tr(X^*X)}$. The following equation holds for any $v, w \in V$ with $\|v\|=\|w\|=1$.
\begin{equation} \label{eq:projDist}
\begin{split}
\|vv^* - ww^*\|^2 &= \tr((vv^*-ww^*)(vv^*-ww^*)) \\
&= 2- 2 \tr(vv^*ww^*)\\
&= 2 - 2 \left| \inner{v,w} \right|^2.
\end{split}
\end{equation}

Let $\Sigma = \{ vv^* : v \in V, \|v \|=1 \}$. The set of self-adjoint operators in $\mathcal{B}(V)$ is a real vector space of dimension four and $\Sigma$ is the surface of a sphere with radius $\frac{1}{2}$ in the three dimensional affine subspace consisting of matrices with trace one \cite{Dav71}. 

Let $C = \{vv^* : v \in U \cap V \}$.  We will show that $C$ is a spherical cap in $\Sigma$. Observe that for $v \in V$ with $\|v \| = 1$,
\begin{equation} \label{eq:newdist}
\begin{split}
2\|v-x\|^2-\tfrac{1}{2}\|v-x\|^4 &= 2\left(2 - 2 \re \inner{x,v}\right) - \tfrac{1}{2}\left( 2 - 2 \re \inner{x,v} \right)^2 \\
&= 2 - 2 (\re \inner{x,v})^2 \\
&\ge 2 - 2 \left| \inner{x,v} \right|^2 = \|vv^*-xx^*\|^2.
\end{split}
\end{equation}  
Furthermore, equality holds in \eqref{eq:newdist} if and only if $\im \inner{x,v} = 0$.  Therefore $C$ is a subset of the set $\{vv^* \in \Sigma : \|vv^*-xx^*\|^2 < 2 \epsilon^2 - \tfrac{1}{2} \epsilon^4 \}$. At the same time, if $v \in V$ with $\|v\|=1$ satisfies $\|vv^*-xx^*\|^2 < 2 \epsilon^2 - \tfrac{1}{2} \epsilon^4$, then we may assume without changing the value of $vv^*$ that $\im \inner{x,v} = 0$.  This implies by \eqref{eq:newdist} that $\|v - x\| < \epsilon$ and therefore $vv^* \in C$. So $C = \{ vv^* \in \Sigma : \| vv^* - xx^* \| < 2 \epsilon^2 - \tfrac{1}{2} \epsilon^4 \}$ which is a spherical cap. This is true, even if $x \notin V$.   

Let $A_2 \in M_2(\C)$ be the compression of $A$ onto $V$. For any $v \in V$, we have 
\begin{equation} \label{eq:hatf}
f_A(v) = \inner{Av, v} = v^*A_2v = \tr(A_2vv^*).
\end{equation}
If we identify $\C$ with $\R^2$, then Lemma \ref{lem:sphericalCap} implies that the image of $C$ under the real linear transformation $X \mapsto \tr(A_2 X)$ is a convex set. Furthermore, if $x \in V$, then there is a $\delta > 0$ such that the image of $C$ also contains the image of $\delta \Sigma + (1-\delta) xx^*$. The constant $\delta$ can be selected based solely on the constant $\epsilon$ without regard to the particular subspace $V$.  Since $V$ may be chosen to contain any $y \in \SH$, we may apply \eqref{eq:hatf} to conclude that $f_A(U)$ contains $\delta W(A) + (1-\delta) z$.  Now consider a linearly independent pair $u,v \in U$. Let $V = \spn \{u, v \}$.  The image of the spherical cap $C$ corresponding to this subspace under the map $X \mapsto \tr(A_2 X)$ will be convex and will therefore contain the line segment connecting $f_A(u)$ to $f_A(v)$.  This proves that $f_A(U)$ is convex.  
\end{proof}

\begin{lemma} \label{lem:isolatedEP}
Let $A \in \BH$, $z \in W(A)$, and $0 < \delta < 1$. The set $\delta W(A) + (1-\delta) z$ contains a neighborhood of $z$ in the relative topology of $W(A)$ if and only if $z$ is not a limit point of $\ext \cl{W(A)}$.  
\end{lemma}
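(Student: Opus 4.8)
The statement is purely about the planar convex geometry of $K := \cl{W(A)}$ together with the sandwich $\on{int} K \subseteq W(A) \subseteq K$, valid for any convex set and its closure; note that $K$ is compact, since $W(A)$ is bounded by $\norm{A}$. I would first translate so that the dilation $S(w) := \delta w + (1-\delta) z$ becomes the homothety centered at $z$ with ratio $\delta$; it is an affine homeomorphism of $\C$ with inverse $g(p) := z + \tfrac{1}{\delta}(p-z)$, and $p \in \delta W(A) + (1-\delta) z$ exactly when $g(p) \in W(A)$. Thus the asserted containment is equivalent to saying that $g(p) \in W(A)$ for every $p \in W(A)$ close enough to $z$. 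If $z \in \on{int} K$ this holds trivially, because $g$ carries a small ball about $z$ into $\on{int} K \subseteq W(A)$, and such a $z$ is automatically not a limit point of $\ext K \subseteq \partial K$. Hence I may assume $z \in \partial K$ throughout.

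For the direction ``limit point $\Rightarrow$ no containment'' I argue by contraposition. Suppose $z$ is a limit point of $\ext K$ and pick distinct extreme points $e_n \to z$. If $e_n = \delta w + (1-\delta) z$ for some $w \in K$, then, since $0<\delta<1$, $e_n$ would be a proper convex combination of $w, z \in K$, forcing $w = z$ by extremality and hence $e_n = z$, a contradiction; so $e_n \notin S(K)$. As $S(K)$ is compact, $e_n$ has positive distance from it, and since $e_n \in \cl{W(A)}$ I can select $w_n \in W(A)$ so near $e_n$ that still $w_n \notin S(K) \supseteq \delta W(A) + (1-\delta) z$, while $w_n \to z$. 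This exhibits points of $W(A)$ arbitrarily close to $z$ lying outside $\delta W(A) + (1-\delta) z$, so no relative neighborhood is contained.

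For the converse I assume $z$ is not a limit point of $\ext K$, so some $B(z,\rho)$ contains no extreme point of $K$ other than possibly $z$. The plan is to show that $K$ is locally polyhedral at $z$: in $\R^2$ the absence of nearby extreme points forces each boundary arc issuing from $z$ to be a line segment, whence there is a closed convex cone $T$ (the tangent cone, a half-plane or a wedge) and a radius $\rho' > 0$ with $K \cap B(z,\rho') = (z + T) \cap B(z,\rho')$. Because lines through $z$ are invariant under the homothety $S$, the same description holds for $S(K)$ on a possibly smaller ball, so $K$ and $S(K)$ coincide near $z$. It remains to upgrade this coincidence from $K$ to $W(A)$, i.e.\ to verify $g(p) \in W(A)$ for $p \in W(A) \cap B(z,r)$ with $r$ small. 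When $p \in \on{int} K$ this is the interior argument already used. When $p$ lies on $\partial K$, it lies on one of the (at most two) lines through $z$ carrying the boundary segments at $z$; such a line is a support line $L_\theta$ as in \eqref{eq:supportline}. Since $g$ fixes $L_\theta$ setwise and $W(A) \cap L_\theta$ is convex (and, by Lemma \ref{lem:supportlines}, is the numerical range of the compression of $A$ to the $\mu(\theta)$-eigenspace), it is an interval $I \ni z$, and $g$ restricts to the homothety of $L_\theta$ about $z$.

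The one place requiring genuine care --- and what I regard as the heart of the argument --- is the behavior of these boundary points, because $W(A)$ need not be closed and $I = W(A) \cap L_\theta$ may be a proper subinterval of the full boundary segment $K \cap L_\theta$. The key observation is that this causes no trouble: $I$ is a fixed interval containing $z$, so whether $z$ is interior to $I$ or an endpoint of it, there is a radius $r>0$ small enough (relative to the fixed distances from $z$ to the endpoints of $I$) that $g$ maps $I \cap B(z,r)$ back into $I \subseteq W(A)$; the ``missing'' points of $K \cap L_\theta$ that fail to lie in $W(A)$ simply never enter $W(A) \cap B(z,r)$ and so need not be covered. Taking $r$ to be the minimum of the radii arising from the interior estimate, the local polyhedrality, and the one or two edge intervals then completes the proof, the degenerate cases where $K$ is a point or a segment being handled directly by the same one-dimensional reasoning.
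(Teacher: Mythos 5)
Your proposal is correct, and while your forward implication (limit point $\Rightarrow$ no containment) is essentially the paper's argument verbatim --- extreme points cannot lie in the compact set $\delta \cl{W(A)} + (1-\delta)z$, so they can be perturbed to nearby points of $W(A)$ that remain outside it --- your converse takes a genuinely different route. The paper proves the contrapositive: from witnesses $w_k \in W(A)$ converging to $z$ outside $\delta W(A) + (1-\delta)z$, it shoots the ray from $z$ through $w_k$ to a boundary point $z_k$, shows $\delta\abs{z_k - z} \le \abs{w_k - z}$ so that $z_k \to z$, and then extracts an extreme point of $\cl{W(A)}$ on the arc of $\partial W(A)$ between $z$ and $z_k$, producing extreme points accumulating at $z$. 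You instead prove the direct implication: absence of nearby extreme points forces $K = \cl{W(A)}$ to coincide near $z$ with $z$ plus a closed convex cone, homothety invariance of cones gives the containment at the level of $K$, and the discrepancy between $W(A)$ and $K$ is confined to the one or two support lines through $z$, where your one-dimensional interval argument finishes. Both routes lean on a planar-convexity fact stated without full proof at the decisive step: your local polyhedrality claim needs Minkowski's theorem ($K = \conv \ext K$ in $\R^2$) together with the fact that endpoints of maximal boundary segments are extreme, which is the same package behind the paper's one-sentence assertion that each $z_k$ is an extreme point or a convex combination of two extreme points, one lying on the arc toward $z$; the levels of rigor are comparable. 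What each approach buys: the paper's contrapositive is shorter and requires no case analysis on the local shape of $K$, while your direct argument produces an explicit local normal form (half-plane or wedge) at $z$ and is the one that cleanly isolates the only real subtlety --- that $W(A)$ need not be closed along its boundary segments --- which the paper sidesteps by working with extreme points of $\cl{W(A)}$ throughout. One sentence of yours deserves tightening: for $p \in \on{int} K$ near $z \in \partial K$, concluding $g(p) \in \on{int} K$ is not literally ``the interior argument already used'' (there $z$ itself was interior); you need either that $g$ is an open map, so that $g$ of a relatively open piece of $K$ landing in $K$ lands in $\on{int} K$, or the identification $\on{int} K \cap B(z,\rho') = \on{int}(z+T) \cap B(z,\rho')$, which the homothety preserves --- either patch is immediate.
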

\begin{proof}
We will prove the equivalent statement: $z$ is a limit point of $\ext \cl{W(A)}$ if and only if $\delta W(A) + (1-\delta) z$ does not contain a neighborhood of $z$ in the relative topology of $W(A)$. To prove the forward implication, suppose that there is a sequence of extreme points $z_k \in \cl{W(A)} \backslash \{z \}$ that converges to $z$. As extreme points, no $z_k$ can be in $\delta \cl{W(A)} + (1-\delta) z$. Moreover $\delta \cl{W(A)} + (1-\delta) z$ is closed, so there is a neighborhood around each $z_k$ that contains an element $w_k$ in $W(A)$ but outside $\delta \cl{W(A)} + (1-\delta) z$. We can choose these $w_k$ so that they converge to $z$, proving that $\delta W(A) + (1-\delta) z$ does not contain a neighborhood of $z$ in $W(A)$.  

To prove the converse, suppose that $w_k \in W(A)$ is a sequence that converges to $z$ and that each $w_k$ is outside the set $\delta W(A) + (1-\delta)z$.  The ray from $z$ through $w_k$ intersects $W(A)$ in a line segment with one endpoint at $z$, and the other endpoint in $\partial W(A)$.  Let $z_k$ denote this endpoint.  Then $w_k = z + \lambda (z_k - z)$ where $0 \le \lambda \le 1$.  Since $w_k$ is not in $\delta W(A) + (1-\delta)z$, $\lambda$ must be at least $\delta$.  This means that $\delta |z_k-z| \le |w_k-z|$.  Since $w_k$ converges to $z$, so does $z_k$. Therefore $z \in \partial W(A)$. Each $z_k$ is either an extreme point of $\cl{W(A)}$, or it is a convex combination of two such extreme points, one of which must lie on the arc of $\partial W(A)$ between $z$ and $z_k$. This proves that $z$ is a limit point of $\ext \cl{W(A)}$.  
%\st{If $z$ is not an extreme point of $\cl{W(A)}$ or if $z$ is an isolated extreme point}, then all points of $\partial W(A)$ that are in a small neighborhood of $z$ are contained in two line segments of the form $[z,\alpha)$ where $\alpha \in \partial W(A)$. Since $z_k \in \partial W(A)$ and $z_k \rightarrow z$, infinitely many of the $z_k$ are in one of these line segments.  So are the corresponding $w_k$, but this is impossible since the $z_k$ must be the endpoint of the line segment from $z$ through $w_k$ in $W(A)$.  We conclude that $z$ must be an extreme point of $\cl{W(A)}$ that is not isolated.  
\end{proof}

The main result of this section now follows from Proposition \ref{prop:scaling}.

\begin{theorem}\label{thm:main}
Let $A \in \BH$ and let $z \in W(A)$. If $z$ is not a limit point of $\ext \cl{W(A)}$, then $f_A^{-1}$ is strongly continuous at $z$. 
\end{theorem}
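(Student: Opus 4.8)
The plan is to unwind the definition of strong continuity and then feed the result directly into the two statements just established. By definition, $f_A^{-1}$ is strongly continuous at $z$ precisely when $f_A$ is open at every preimage $x \in f_A^{-1}(z)$, meaning that for each such $x$ and each neighborhood $U$ of $x$ in $\SH$, the image $f_A(U)$ contains a relative neighborhood of $z$ in $W(A)$. So I would fix an arbitrary $x$ with $f_A(x)=z$ and an arbitrary neighborhood $U$ of $x$; since the balls $\{y \in \SH : \norm{y-x} < \epsilon\}$ form a neighborhood base at $x$ in $\SH$, it suffices to treat $U$ of this form.

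With $U = \{y \in \SH : \norm{y-x} < \epsilon\}$, Proposition \ref{prop:scaling} applies verbatim and produces a constant $\delta > 0$ with $\delta W(A) + (1-\delta) z \subseteq f_A(U)$. Before invoking Lemma \ref{lem:isolatedEP}, I would shrink $\delta$ if necessary so that $0 < \delta < 1$, as that lemma requires. Using the convexity of $W(A)$ (from the Toeplitz--Hausdorff theorem) together with $z \in W(A)$, one checks that $\delta' W(A) + (1-\delta') z \subseteq \delta W(A) + (1-\delta) z$ whenever $0 < \delta' \le \delta$: a point $\delta' w + (1-\delta')z$ equals $\delta w' + (1-\delta)z$ for $w' = \tfrac{\delta'}{\delta} w + (1-\tfrac{\delta'}{\delta}) z \in W(A)$. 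Hence replacing $\delta$ by $\min\{\delta, 1/2\}$ only shrinks the inner set and preserves the inclusion in $f_A(U)$.

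Now the hypothesis enters. Since $z$ is not a limit point of $\ext \cl{W(A)}$, Lemma \ref{lem:isolatedEP} guarantees that $\delta W(A) + (1-\delta) z$ contains a relative neighborhood of $z$ in $W(A)$. Combining this with the inclusion from Proposition \ref{prop:scaling}, the image $f_A(U)$ contains a relative neighborhood of $z$ as well. As $U$ and $x$ were arbitrary, $f_A$ is open at every preimage of $z$, which is exactly strong continuity of $f_A^{-1}$ at $z$.

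I do not expect any genuine obstacle here, since the substantive content has already been isolated in Proposition \ref{prop:scaling} (the ``scaled Toeplitz--Hausdorff'' statement that local images contain a shrunken copy of $W(A)$) and in Lemma \ref{lem:isolatedEP} (the geometric characterization of when such a shrunken copy is a relative neighborhood of $z$). The only points requiring care are the reduction to basic $\epsilon$-balls and the verification that $\delta$ may be taken in $(0,1)$, both of which are routine once the convexity of $W(A)$ is used.
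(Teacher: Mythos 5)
Your proof is correct and follows essentially the same route as the paper's: fix a preimage $x$ and a basic $\epsilon$-ball $U$, invoke Proposition \ref{prop:scaling} to obtain $\delta W(A) + (1-\delta)z \subseteq f_A(U)$, and then apply Lemma \ref{lem:isolatedEP} to conclude that $f_A(U)$ contains a relative neighborhood of $z$. Your additional step of shrinking $\delta$ into $(0,1)$ via convexity of $W(A)$ is a small technical refinement that the paper leaves implicit, but it changes nothing substantive.
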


\begin{proof}
Fix $x \in f_A^{-1}(z)$. By Proposition \ref{prop:scaling}, the image $f_A(U)$ of any neighborhood $U$ around $x$ in $\SH$ will contain $\delta W(A) + (1-\delta)z$ for some $\delta > 0$. Then Lemma \ref{lem:isolatedEP} implies that $f_A(U)$ contains a neighborhood of $z$ in the relative topology of $W(A)$, so $f_A^{-1}$ is strongly continuous at $z$.  
\end{proof}

In the finite dimensional setting, the numerical range of a normal matrix is a convex polygon, so Theorem \ref{thm:main} implies the inverse numerical range map is strongly continuous everywhere on that polygon. For normal operators defined on an infinite dimensional space, however, it is possible for strong and weak continuity of the inverse numerical range map to fail. The sufficient condition for strong continuity in Theorem \ref{thm:main} turns out to be necessary for weak continuity when $A$ is a normal operator.  

\begin{theorem} \label{thm:normal}
Let $A \in \BH$ be normal and $z \in W(A)$. If $z$ is a limit point of $\ext \cl{W(A)}$, then $f_A^{-1}$ is not weakly continuous at $z$. %For all other $z \in W(A)$, $f_A^{-1}$ is strongly continuous at $z$. 
\end{theorem}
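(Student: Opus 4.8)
The plan is to use the spectral theorem to turn the statement into a convexity estimate and then feed that estimate into Lemma~\ref{lem:isolatedEP}. Since $A$ is normal, let $E$ denote its spectral measure and, for $y \in \SH$, set $\mu_y := \inner{E(\cdot)y,y}$; this is a Borel probability measure on $\sigma(A)$, $f_A(y) = \int_{\sigma(A)} \lambda\, d\mu_y(\lambda)$ is its barycenter, and $\cl{W(A)} = \cl{\conv \sigma(A)}$. The first step is to upgrade the hypothesis: in the plane the extreme points of a compact convex set form a closed set, so a limit point of $\ext \cl{W(A)}$ is itself an extreme point. Hence $z \in \ext \cl{W(A)} \subseteq \sigma(A)$, and by Lemma~\ref{lem:supportlines} $z$ lies on a support line $L_\theta$ of $\cl{W(A)}$.

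Second, I would show that \emph{every} preimage of $z$ is an eigenvector, i.e. $f_A(x) = z$ implies $Ax = zx$. Because $z$ is the barycenter of $\mu_x$ and lies on the support line $L_\theta$, applying the affine support functional $w \mapsto \re(e^{-i\theta}w)$ forces $\mu_x$ to be concentrated on $L_\theta \cap \sigma(A)$. The slice $\cl{W(A)} \cap L_\theta$ is a segment, and since $z$ is extreme it must be an endpoint of that segment; a probability measure on a segment whose barycenter is an endpoint is the point mass there. Thus $\mu_x = \delta_z$, which gives $\|E(\{z\})x\| = 1$, so $z$ is an eigenvalue and $f_A^{-1}(z) = \SH \cap \range E(\{z\})$.

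Third comes the main estimate. Fix any $x \in f_A^{-1}(z)$, so $E(\{z\})x = x$, and let $U = \{y \in \SH : \|y-x\| < \epsilon\}$. Writing $s := \|E(\{z\})y\|^2 = \mu_y(\{z\})$, a short computation with the spectral measure gives $f_A(y) = z + (1-s)(b-z)$ with $b \in \cl{W(A)}$, while $1-s \le 1-(1-\epsilon)^2 =: \eta$ because $\|E(\{z\})y\| \ge 1 - \|y-x\|$. Since $\cl{W(A)}$ is convex and contains $z$, this yields $f_A(U) \subseteq \eta\, \cl{W(A)} + (1-\eta)z$, where $\eta \in (0,1)$ for small $\epsilon$. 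Now I invoke Lemma~\ref{lem:isolatedEP}: because $z$ is a limit point of $\ext\cl{W(A)}$, the construction in its proof produces points $w_k \in W(A)$ with $w_k \to z$ and $w_k \notin \eta\,\cl{W(A)} + (1-\eta)z \supseteq f_A(U)$. Hence $f_A(U)$ contains no relative neighborhood of $z$ in $W(A)$, so $f_A$ fails to be open at $x$. As $x \in f_A^{-1}(z)$ was arbitrary, no preimage witnesses openness and $f_A^{-1}$ is not weakly continuous at $z$.

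The step I expect to be the main obstacle is the second one, showing that every preimage is a genuine eigenvector. The delicate point is that a limit point of extreme points can be extreme without being \emph{exposed} (think of where a circular arc meets a line segment on the boundary), so there need not be a support line meeting $\cl{W(A)}$ only at $z$. Routing the argument through $L_\theta$ and the observation that $z$ is an endpoint of the one-dimensional slice $\cl{W(A)} \cap L_\theta$ sidesteps this issue. A minor technical point to get right is that Lemma~\ref{lem:isolatedEP} must be used in its closed form, with $\cl{W(A)}$ rather than $W(A)$, which is exactly what the construction inside its proof delivers.
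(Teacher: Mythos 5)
Your proposal is correct, and it shares the paper's overall skeleton: use closedness of the extreme points of a planar compact convex set to conclude $z \in \ext\cl{W(A)}$, show every preimage of $z$ is an eigenvector, trap $f_A(U)$ inside a strictly shrunk homothety of the numerical range anchored at $z$, and invoke Lemma~\ref{lem:isolatedEP} to defeat openness at every preimage. The execution, however, differs in two genuine ways. First, where the paper simply cites Berberian for the fact that $f_A^{-1}(z)$ consists of eigenvectors, you prove it from scratch via the spectral measure (support-line concentration plus the endpoint-barycenter argument); this buys self-containedness at the cost of invoking $\cl{W(A)} = \cl{\conv \sigma(A)}$ and Milman-type facts. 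Second, your trapping estimate is run through the barycenter decomposition $f_A(y) = s z + (1-s)b$ with $s = \mu_y(\{z\})$, landing in $\eta\,\cl{W(A)} + (1-\eta)z$, whereas the paper writes $v = \alpha x + \beta y$ with $y \perp x$ and uses normality ($A^*x = \bar z x$) to get $f_A(v) = |\alpha|^2 z + |\beta|^2 \inner{Ay,y} \in \tfrac12 W(A) + \tfrac12 z$. The paper's version has the small advantage that $\inner{Ay,y}$ visibly lies in $W(A)$ itself, so Lemma~\ref{lem:isolatedEP} applies verbatim as a black box; your version needs the lemma ``in closed form,'' forcing you to reopen its proof --- a point you correctly flag and correctly resolve, since the construction there does produce $w_k \in W(A)$ outside $\eta\,\cl{W(A)} + (1-\eta)z$. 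You could have avoided even this wrinkle by normalizing $(I - E(\{z\}))y$: its $f_A$-image is exactly your $b$, which then lies in $W(A)$ rather than merely in its closure, recovering the paper's cleaner containment within your spectral-measure framework.
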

\begin{proof}
%If $z$ is not an extreme point of $\cl{W(A)}$ or if $z$ is an isolated extreme point, then $f_A^{-1}$ is strongly continuous at $z$ by Theorem \ref{thm:main}. So we will focus on a $z \in \ext \cl{W(A)}$ which is not isolated in that set. 
Since $W(A)$ is a convex set in the two dimensional real vector space $\C$, the set of extreme points of $\cl{W(A)}$ is closed. This means that $z \in W(A) \cap \ext \cl{W(A)}$. Any such $z$ is an eigenvalue of $A$ and any $x \in f_A^{-1}(z)$ is an eigenvector corresponding to $z$ \cite{Ber67}.  Fix one particular $x \in f_A^{-1}(z)$. Because the inner product is continuous, the set of $v \in \SH$ such that $|\inner{x,v}| > \frac{\sqrt{2}}{2}$ is an open neighborhood of $x$.  Any $v$ in this neighborhood can be decomposed as $v = \alpha x + \beta y$
where $\alpha = \inner{x,v}$, $y = \frac{v -\alpha x}{\|v-\alpha x\|}$ is orthogonal to $x$, and $\beta = \|v-\alpha x\|$. The operator $A$ is normal, so $x$ is also an eigenvector of $A^*$ \cite[Theorem 12.12]{RudinFA}. This lets us calculate $f_A(v)$: 
\begin{align*}
\inner{Av,v} &= \inner{\alpha Ax + \beta Ay, \alpha x + \beta  y}  \\
&= |\alpha|^2 \inner{Ax, x} + \alpha \overline{\beta} \inner{Ax, y} + \beta \overline{\alpha} \inner{Ay,x} + |\beta|^2 \inner{Ay, y}  \\
&= |\alpha|^2 \inner{Ax, x} + \alpha \overline{\beta} \inner{Ax,y} + \beta \overline{\alpha} \inner{y,A^*x} + |\beta|^2 \inner{Ay, y}.  \\
 &= |\alpha|^2 \inner{Ax,x} + |\beta|^2 \inner{Ay,y} \\
&= |\alpha|^2 z + |\beta|^2 \inner{Ay,y}.
\end{align*} 
Since $|\alpha|^2 + |\beta|^2 = 1$ and $|\alpha|^2 > \frac{1}{2}$, we conclude that $f_A(v) \in \frac{1}{2}W(A)+ \frac{1}{2}z$ for any $v$ in this neighborhood around $z$.  By Lemma \ref{lem:isolatedEP}, $\frac{1}{2}W(A)+ \frac{1}{2}z$ does not contain a neighborhood of $z$ in the relative topology of $W(A)$.  Therefore $f_A$ is not an open mapping at $x$ which means that $f_A^{-1}$ is not weakly continuous at $z$.  
\end{proof}

\section{Inverse Continuity on the Boundary}
In this section we investigate strong and weak inverse continuity for points on the boundary of the numerical range that are not covered by Theorem \ref{thm:main}. Let us begin with a review of what is known about the boundary of $W(A)$ when $A$ is a bounded operator on an infinite dimensional Hilbert space. Recall that the \emph{essential numerical range} of an operator $A \in \BH$ is the set $W_e(A) = \bigcap_{K \in \mathcal{K}(\mathcal{H})} \cl{W(A+K)}$ where $\mathcal{K}(\mathcal{H})$ is the set of compact operators on $\mathcal{H}$. It is readily apparent that $W_e(A)$ is a closed, convex subset of $\cl{W(A)}$.  Fillmore et al. observed \cite{FiStWi} that $z \in W_e(A)$ if and only if there is a sequence $x_n \in \SH$ such that $x_n$ converges weakly to $0$ while $f_A(x_n) \rightarrow z$. 

P. Lancaster \cite{Lancaster75} described the relationship between the boundary of the numerical range and the essential numerical range. By \cite[Theorem 1]{Lancaster75}, the extreme points of $\cl{W(A)}$ are contained in $W(A) \cup W_e(A)$. %It follows that $W(A)$ is closed whenever $W_e(A) \cap \partial W(A) = \varnothing$.  

%For a self-adjoint operator $A \in \BH$, the \emph{discrete spectrum} of $A$, denoted $\sigma_d(A)$, is the set of isolated eigenvalues of $A$ with finite multiplicity.  The \emph{essential spectrum} $\sigma_e(A)$ is the complement of the discrete spectrum in the spectrum of $A$, that is, $\sigma_e(A) = \sigma(A) \backslash \sigma_d(A)$. \hl{Find a good reference and finish this!!!}

\begin{lemma} \label{lem:M}
Let $A \in \BH$ and let $M$ denote the set of angles $\theta$ for which the maximum value of the spectrum of $\re(e^{-i \theta} A)$ is an isolated eigenvalue with finite multiplicity. Let $L_\theta$ be defined as in \eqref{eq:supportline}. Then $\theta \in M$ if and only if  $L_\theta \cap \cl{W(A)}$ does not contain elements of $W_e(A)$.
\end{lemma}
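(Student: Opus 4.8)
The plan is to translate the membership condition $\theta \in M$ into a statement about the essential spectrum and then link that statement to $W_e(A)$ through the Fillmore--Stampfli--Williams characterization recalled earlier in this section. Write $T_\theta := \re(e^{-i\theta}A)$, which is self-adjoint by the remarks opening Section 2, and recall $\mu(\theta) = \max \sigma(T_\theta)$. Since the spectrum of a self-adjoint operator is the disjoint union of its discrete and essential parts, $\theta \in M$ holds precisely when $\mu(\theta) \notin \sigma_{\mathrm{ess}}(T_\theta)$. By Lemma \ref{lem:supportlines}, the face $L_\theta \cap \cl{W(A)}$ is exactly the set of $z \in \cl{W(A)}$ with $\re(e^{-i\theta}z) = \mu(\theta)$. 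So the task reduces to showing that $\mu(\theta) \notin \sigma_{\mathrm{ess}}(T_\theta)$ if and only if no such $z$ lies in $W_e(A)$.

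For the forward implication I would argue by contradiction. Assuming $\theta \in M$, the value $\mu(\theta)$ is an isolated eigenvalue of finite multiplicity, so there is a spectral gap $\sigma(T_\theta) \setminus \{\mu(\theta)\} \subseteq (-\infty, \mu(\theta)-\gamma]$ for some $\gamma > 0$, with finite-rank spectral projection $P$ onto $\{\mu(\theta)\}$. If some $z \in W_e(A)$ lay on $L_\theta$, then the Fillmore--Stampfli--Williams characterization would furnish $x_n \in \SH$ with $x_n \rightharpoonup 0$ and $f_A(x_n) \to z$, whence $\inner{T_\theta x_n, x_n} \to \re(e^{-i\theta}z) = \mu(\theta)$. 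Splitting $x_n = Px_n + (I-P)x_n$ and using $\inner{T_\theta (I-P)x_n, (I-P)x_n} \le (\mu(\theta)-\gamma)\norm{(I-P)x_n}^2$ yields $\inner{T_\theta x_n, x_n} \le (\mu(\theta)-\gamma) + \gamma \norm{Px_n}^2$. Since $P$ has finite rank and $x_n \rightharpoonup 0$, we have $\norm{Px_n} \to 0$, giving $\limsup_n \inner{T_\theta x_n, x_n} \le \mu(\theta)-\gamma < \mu(\theta)$, a contradiction.

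For the converse I would prove the contrapositive: if $\theta \notin M$, then $L_\theta \cap \cl{W(A)}$ meets $W_e(A)$. Here $\mu(\theta) \in \sigma_{\mathrm{ess}}(T_\theta)$, so by the Weyl criterion there is an orthonormal sequence $x_n$ with $\norm{(T_\theta - \mu(\theta))x_n} \to 0$; being orthonormal it satisfies $x_n \rightharpoonup 0$, and $\inner{T_\theta x_n, x_n} \to \mu(\theta)$. The values $z_n := f_A(x_n)$ are bounded by $\norm{A}$, so a subsequence converges, $z_{n_k} \to z$. Then $\re(e^{-i\theta}z) = \lim_k \inner{T_\theta x_{n_k}, x_{n_k}} = \mu(\theta)$, placing $z$ in $L_\theta \cap \cl{W(A)}$, while $x_{n_k} \rightharpoonup 0$ together with $f_A(x_{n_k}) \to z$ places $z \in W_e(A)$ by Fillmore--Stampfli--Williams. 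Combining the two implications gives the stated equivalence.

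Setting aside the routine spectral estimates, the main obstacle is the correct construction and control of the two sequences. In the forward direction the crux is exploiting the finite rank of the top spectral projection to annihilate the contribution of $\mu(\theta)$ along a weakly null sequence, so that such a sequence cannot push $\inner{T_\theta x_n, x_n}$ all the way up to $\mu(\theta)$. In the converse direction the delicate point is passing from a Weyl sequence for the self-adjoint operator $T_\theta$ to one whose full complex numerical values $f_A(x_n)$ converge; this requires the boundedness of $W(A)$ to extract a convergent subsequence and then the Fillmore--Stampfli--Williams description to certify that the limit lies simultaneously on $L_\theta$ and in $W_e(A)$. I expect this extraction-and-identification step to need the most care.
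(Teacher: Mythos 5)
Your proposal is correct, and half of it diverges from the paper in an interesting way. Your contrapositive direction ($\theta \notin M$ implies $L_\theta$ meets $W_e(A)$) is exactly the paper's argument: the singular Weyl criterion gives a weakly null sequence of unit vectors $x_k$ with $\inner{\re(e^{-i\theta}A)x_k, x_k} \to \mu(\theta)$, one extracts a convergent subsequence of the bounded values $f_A(x_k)$, and the Fillmore--Stampfli--Williams characterization identifies the limit as a point of $W_e(A)$ on $L_\theta$. For the other direction, however, the paper does something shorter and structurally different: it invokes the \emph{definition} of $W_e(A)$ as $\bigcap_K \cl{W(A+K)}$, producing a compact self-adjoint $K$ (essentially $-\gamma P$, with $P$ the finite-rank spectral projection) that pushes the top of the spectrum of $\re(e^{-i\theta}A)+K$ strictly below $\mu(\theta)$, so that $\cl{W(A+e^{i\theta}K)}$, and hence $W_e(A)$, misses $L_\theta$. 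You instead stay entirely within the sequential characterization: assuming a point of $W_e(A)$ on $L_\theta$, you take the weakly null sequence it provides and show the finite-rank projection $P$ annihilates its contribution at the top of the spectrum, capping $\limsup_n \inner{\re(e^{-i\theta}A)x_n, x_n}$ at $\mu(\theta)-\gamma$ --- a contradiction. Your spectral-gap estimate is sound (the cross terms vanish since $P$ commutes with $\re(e^{-i\theta}A)$, and finite-rank operators send weakly null sequences to norm-null ones). The trade-off: the paper's perturbation argument is two lines but leans on the intersection definition of $W_e(A)$ plus Lemma \ref{lem:supportlines} applied to $A+e^{i\theta}K$; your argument is longer but makes both directions rest on the single Fillmore--Stampfli--Williams criterion, which is arguably cleaner conceptually and more quantitative.
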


\begin{proof}
For $\theta \in \R$, let $\mu(\theta)$ denote the maximum of the spectrum of $\re(e^{-i \theta}A)$. If $\mu(\theta)$ is not an isolated eigenvalue with finite multiplicity of $\re(e^{-i \theta} A)$, then $\mu(\theta)$ is in the essential spectrum of $\re(e^{-i \theta} A)$.  By Weyl's criterion \cite[Theorem VII.12]{RS80} there is a sequence $x_k \in \SH$ such that $\|\re(e^{-i \theta} A) x_k - \mu(\theta) x_k\| \rightarrow 0$ as $k\rightarrow \infty$ while $x_k$ converges weakly to 0. Then $\inner{\re(e^{-i \theta} A)x_k,  x_k} \rightarrow \mu(\theta)$.  By passing to a subsequence, we can assume that $\inner{Ax_k, x_k}$ also converges, and the limit will be an element of $W_e(A)$ that is contained in $L_\theta$.  

Conversely, suppose that $\mu(\theta)$ is an isolated eigenvalue of $\re(e^{-i \theta} A)$ with finite multiplicity.  There is a compact self-adjoint operator $K$ such that the maximum element of the spectrum of $\re(e^{-i \theta} A) + K$ is strictly less than $\mu(\theta)$.  Then $\cl{W(A+K})$ does not intersect $L_\theta$, so $L_\theta$ cannot contain an element of $W_e(A)$.  
\end{proof}

\color{black}

The following proposition is a summary of several results of Narcowich \cite{Nar}, restated in terms of the essential numerical range with the help of Lemma \ref{lem:M}. 

\begin{proposition} \label{prop:bdry}
Let $A \in \BH$. Any connected subset of $\partial W(A)$ that is separated from $W_e(A)$ is a piecewise analytic curve. Each analytic portion is either a line segment or can be parameterized by $\mu(\theta) + i \mu'(\theta)$ on an open interval where $\mu(\theta)$ is the maximum element of the spectrum of $\re(e^{-i \theta} A)$ and $\mu(\theta)$ is also an isolated eigenvalue with finite multiplicity on that interval. The points where the curve is not analytic may accumulate, but only at endpoints of line segments in $\partial W(A)$ that also contain elements of $W_e(A)$. In particular, if $W_e(A) \cap \partial W(A) = \varnothing$, then $\partial W(A)$ is a finite union of analytic curves. 
\end{proposition}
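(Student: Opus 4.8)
The plan is to reconstruct Narcowich's description of $\partial W(A)$ \cite{Nar} directly in the operator setting, using the support-line parametrization of Lemma \ref{lem:supportlines}, the Kato analyticity of Proposition \ref{prop:Kato}, and the critical-curve formula \eqref{eq:critcurve}, and then to convert the ``isolated eigenvalue of finite multiplicity'' condition into the language of $W_e(A)$ via Lemma \ref{lem:M}. Throughout I would parametrize the convex boundary $\partial W(A)$ by the outward normal angle $\theta$: the support function is $\mu(\theta) = \max \sigma(\re(e^{-i\theta}A))$, and wherever $\mu$ is differentiable the corresponding boundary point is $e^{i\theta}(\mu(\theta) + i\mu'(\theta))$, which is exactly the critical curve \eqref{eq:critcurve} with $\lambda = \mu$. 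Recall that $M$ is the set of angles $\theta$ for which $\mu(\theta)$ is an isolated eigenvalue of finite multiplicity.

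First I would show that $M$ is open and that, for angles in $M$, the non-analytic points of $\partial W(A)$ are locally isolated. For $\theta_0 \in M$ the eigenvalue $\mu(\theta_0)$ is separated by a spectral gap from the rest of $\sigma(\re(e^{-i\theta_0}A))$; since $\re(e^{-i\theta}A)$ depends on $\theta$ analytically in norm, this gap, and hence membership in $M$, persists on a neighborhood, and by Proposition \ref{prop:Kato} the top spectral group there splits into finitely many analytic branches $\lambda_1(\theta),\dots,\lambda_m(\theta)$ with $\mu = \max_j \lambda_j$. Between branch changes $\mu$ is analytic, and there the boundary is traced either by an analytic arc via \eqref{eq:critcurve} (when $\mu + \mu'' \not\equiv 0$) or, for a sinusoidal branch $\mu + \mu'' \equiv 0$, stays at one point, producing a vertex of $W(A)$. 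An angle where the active branch changes is a corner of $\mu$ and corresponds to a line segment of $\partial W(A)$. Since distinct analytic functions agree only on a discrete set the branch-change angles are isolated in $M$, and since an analytic branch that is not sinusoidal on a subinterval is sinusoidal nowhere on it, vertices cannot accumulate within $M$ either; this gives the local piecewise-analytic structure with isolated non-analytic points.

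Next I would transfer this to $\Gamma$ and localize the accumulation. Given $z \in \Gamma$ on a support line $L_\theta$, the contact set $C = L_\theta \cap \cl{W(A)}$ is a point or a segment. If $C$ is a segment then $z$ lies on a line segment of $\partial W(A)$, which is already analytic; if $C = \{z\}$ then $C$ avoids $W_e(A)$ because $z$ is separated from it, so Lemma \ref{lem:M} gives $\theta \in M$ and the previous paragraph applies. Thus every point of $\Gamma$ is an interior point of a segment or lies on an analytic arc coming from $M$, with only the junctions possibly non-analytic. To pin down accumulation, suppose non-analytic points $z_k \to z^*$ with normal angles $\theta_k \to \theta^*$. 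If $\theta^* \in M$ the non-analytic angles near $\theta^*$ would be isolated, contradicting the existence of the sequence, so $\theta^* \notin M$ and Lemma \ref{lem:M} forces $C^* = L_{\theta^*} \cap \cl{W(A)}$ to contain some $w \in W_e(A)$. Because non-analytic points never lie in the interior of a segment, after passing to a subsequence the $\theta_k \ne \theta^*$ approach $\theta^*$ one-sidedly, so $z^*$ is the endpoint of $C^*$ reached from that side; separation of $\Gamma$ from $W_e(A)$ gives $z^* \ne w$, hence $C^*$ is a nondegenerate segment and $z^*$ is an endpoint of a line segment of $\partial W(A)$ containing an element of $W_e(A)$, as claimed. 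The ``in particular'' statement then follows: if $W_e(A) \cap \partial W(A) = \varnothing$, every contact set avoids $W_e(A)$, so Lemma \ref{lem:M} puts all angles in $M$, no accumulation can occur, and a discrete set of non-analytic points on the compact circle of normal directions is finite, so $\partial W(A)$ is a finite union of analytic arcs.

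I expect the main obstacle to be the accumulation analysis of the third paragraph: rigorously matching a one-sided limit $\theta_k \to \theta^*$ to a single endpoint of the contact segment $C^*$, and ruling out accumulation of vertices and branch-changes inside $M$. The first rests on the monotone correspondence between normal angle and position along a convex boundary together with the fact that the interior of a flat edge carries a single normal direction, while the second rests on the rigidity of real-analytic eigenvalue branches (discrete crossings and the sinusoidal-or-never dichotomy for $\mu + \mu'' \equiv 0$). Some extra care is needed at genuine corners of $W(A)$, where an interval of normal angles collapses to one point; there one checks, again via Lemma \ref{lem:M}, that a corner separated from $W_e(A)$ is an isolated non-analytic point rather than a limit of such points.
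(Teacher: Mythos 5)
Your reconstruction is essentially sound, but it is worth being clear that it does something the paper itself never does: the paper contains no proof of Proposition \ref{prop:bdry}. The proposition is introduced as ``a summary of several results of Narcowich \cite{Nar}, restated in terms of the essential numerical range with the help of Lemma \ref{lem:M},'' and after stating it the authors simply refer the reader to \cite{Nar}. The only original content at this point of the paper is the dictionary provided by Lemma \ref{lem:M}, which converts Narcowich's spectral hypothesis (the maximum of the spectrum of $\re(e^{-i\theta}A)$ is an isolated eigenvalue of finite multiplicity) into the geometric condition that $L_\theta\cap\cl{W(A)}$ avoids $W_e(A)$. Your proposal, by contrast, re-proves the cited results from the paper's own ingredients: persistence of the spectral gap to show the angle set $M$ is open, Proposition \ref{prop:Kato} to split the top spectral group into finitely many analytic branches, the rigidity of analytic functions to isolate branch crossings (flat edges) and to handle sinusoidal branches (corners), and a compactness argument on normal angles to force any accumulation of singular points onto a contact segment meeting $W_e(A)$, with the one-sided monotone correspondence between angles and boundary points pinning the limit at an endpoint of that segment. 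This is, in spirit, Narcowich's own strategy carried out with the paper's lemmas, so your route coincides with the paper's \emph{source} rather than with the paper; what it buys is a self-contained argument and an explicit check that the translation through $W_e(A)$ really does all the work, at the cost of redoing perturbation theory the authors deliberately outsourced.

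Two details deserve care if you flesh this out, though neither breaks the argument. First, your claim that between branch changes the boundary is ``an analytic arc via \eqref{eq:critcurve} when $\mu+\mu''\not\equiv 0$'' is a statement about the parametrization, not the point set: at an isolated (necessarily even-order) zero of $\mu+\mu''$ the radius of curvature vanishes and the boundary can fail to be analytic as a curve at that point. Such points are isolated, and the proposition's own phrasing (an analytic portion is one ``parameterized by $\mu(\theta)+i\mu'(\theta)$ on an open interval'') absorbs them, but you should not assert set-theoretic analyticity there. Second, your closing remark that a corner separated from $W_e(A)$ is never a limit of non-analytic points is too strong: if an extreme normal angle of the corner carries a contact segment meeting $W_e(A)$, then the corner is an endpoint of a flat portion containing a point of $W_e(A)$, and singular points may indeed accumulate there --- which is exactly the exceptional case the proposition permits, and which your third paragraph already handles correctly.
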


Each curved analytic portion of $\partial W(A)$ described above is a critical curve corresponding to the maximal eigenvalue of $\re(e^{-i \theta} A)$ on an interval of values of $\theta$. Flat analytic portions correspond to angles $\theta$ where the maximal eigenvalue function splits into two or more eigenvalue functions with different slopes.  We refer the interested reader to \cite{Nar} for more details. %One thing to note is that a flat portion of $\partial W(A)$ contains an element of $W_e(A)$ if an only if the corresponding maximal eigenvalue of $\re(e^{-i \theta}A)$ has infinite multiplicity or is not isolated. 

\begin{lemma} \label{lem:open}
Let $A \in \BH$. For each $z \in \ext W(A)$, let $P(z)$ denote the orthogonal projection onto the closure of the span of $f_A^{-1}(z)$. If $f_A$ is an open mapping in the relative topology of $W(A)$ at $x \in \SH$ and $f_A(x) = z \in \ext W(A)$, then for any sequence $z_k \in \ext W(A)$ that converges to $z$, $P(z_k) x \rightarrow x$. 
\end{lemma}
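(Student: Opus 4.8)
The plan is to exploit the openness hypothesis to produce, for each $k$, a unit vector $x_k \in f_A^{-1}(z_k)$ with $x_k \to x$, and then to read off the conclusion from the fact that every such $x_k$ is fixed by the projection $P(z_k)$. Note that the extremality assumptions on $z$ and the $z_k$ enter only to make the projections $P(z_k)$ (and $P(z)$) well defined; the argument itself does not use extremality beyond the fact that $z_k \in W(A)$, so that $f_A^{-1}(z_k) \neq \varnothing$.

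First I would build the sequence $x_k$. For each $n \in \N$, consider the relative ball $U_n = \{y \in \SH : \|y - x\| < 1/n\}$. Since $f_A$ is open at $x$ in the relative topology of $W(A)$, the image $f_A(U_n)$ is a relative neighborhood of $z$, so there is $\rho_n > 0$ with $\{w \in W(A) : \abs{w - z} < \rho_n\} \subseteq f_A(U_n)$. Because $z_k \to z$ and each $z_k \in \ext W(A) \subseteq W(A)$, a standard exhaustion (diagonal) argument across the shrinking balls $U_n$ produces, for all large $k$, a vector $x_k \in U_n$ with $f_A(x_k) = z_k$ and with the admissible source radius tending to $0$; the upshot is a sequence $x_k \in f_A^{-1}(z_k)$ with $\|x_k - x\| \to 0$. (Where $z_k = z$ one may simply take $x_k = x$.)

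With the sequence in hand, the conclusion is immediate. Since $x_k \in f_A^{-1}(z_k)$, the vector $x_k$ lies in the closure of the span of $f_A^{-1}(z_k)$, i.e.\ in the range of $P(z_k)$, so $P(z_k) x_k = x_k$. Writing $P(z_k) x - x = P(z_k)(x - x_k) - (x - x_k)$ and using $\|P(z_k)\| \le 1$ gives $\|P(z_k) x - x\| \le 2\,\|x - x_k\| \to 0$, as required.

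The main obstacle is the first step: converting the pointwise openness of $f_A$ at $x$ into a single sequence of preimages $x_k$ that actually converges to $x$. The delicate point is that openness only furnishes, for each neighborhood of $x$, \emph{some} relative neighborhood of $z$ in its image, so one must interleave the shrinking of the source balls $U_n$ with the convergence $z_k \to z$ to guarantee that the preimages can be taken arbitrarily close to $x$. Once that sequence exists, the projection estimate closes the argument with no further work.
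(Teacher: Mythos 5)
Your proof is correct and is essentially the paper's argument run in the contrapositive direction: the paper assumes $\|P(z_k)x - x\| > 2\epsilon$ along a subsequence and uses the very same estimate you do (triangle inequality, $P(z_k)y = y$ for $y \in f_A^{-1}(z_k)$, and $\|P(z_k)\| \le 1$, giving $\|x - P(z_k)x\| \le 2\|x-y\|$) to conclude that every preimage of every such $z_k$ avoids the $\epsilon$-ball around $x$, so $f_A$ of that ball misses all the $z_k$ and hence is not a relative neighborhood of $z$. The only real difference is that your direct formulation requires the diagonal construction of a single sequence of preimages $x_k \to x$, a step the contradiction argument sidesteps entirely, but that construction is routine and your version is sound.
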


\begin{proof}
Suppose by way of contradiction that $P(z_k) x$ does not converge to $x$. We may assume by passing to a subsequence that there is an $\epsilon > 0$ such that $\|P(z_k)x - x \| > 2\epsilon$ for all $z_k$. Choose any $z_k$ and $y \in f_A^{-1}(z_k)$.  Then
\begin{align*}
2\epsilon &< \| x - P(z_k) x \| & \\
&\le \|x - y \| + \|y - P(z_k) x\| & \text{(Triangle inequality)}\\
&= \|x - y \| + \|P(z_k) y - P(z_k) x\| & \text{(Since $P(z_k) y = y$)} \\
&\le \|x - y \| + \|P(z_k)\| \|x- y  \|  &  \\
&\le 2 \|x-y \|.
\end{align*}
In particular, $y$ is not in the neighborhood $U = \{y \in \SH : \|x-y\| < \epsilon \}$ around $x$. Therefore $f_A(U)$ does not contain any $z_k$, so $f_A$ is not an open mapping at $x$.    
\end{proof}

\begin{remark}
In the statement of Lemma \ref{lem:open}, we defined $P(v)$ to be the orthogonal projection onto the closure of the span of $f_A^{-1}(v)$.  In fact, the span of $f_A^{-1}(v)$ is always a closed subspace when $v \in \ext W(A)$, so it is redundant to refer to its closure. 
\end{remark}

The main result of this section follows. It extends \cite[Theorem 2.1]{LLS131} to operators in infinite dimensions. This theorem does not completely characterize when weak and strong continuity hold on $\partial W(A)$ because it only applies to points where the corresponding maximal eigenvalue of $\re(e^{-i \theta} A)$ is isolated and has finite multiplicity. 

\begin{theorem} \label{thm:compactchar}
Let $A \in \BH$ and $z \in W(A) \cap \ext \cl{W(A)}$. Let $L_\theta$ be defined as in Lemma \ref{lem:supportlines}. If $z \in L_{\theta_0}$ where the maximum of the spectrum of $\re(e^{-i \theta_0} A)$ is an isolated eigenvalue with finite multiplicity, then 
\begin{enumerate}
\item $f_A^{-1}$ is strongly continuous at $z$ if and only if $z$ is contained in only one critical curve of $A$; 
\item $f_A^{-1}$ is weakly continuous at $z$ if and only if $\partial W(A)$ is analytic at $z$ or $z$ is an endpoint of a flat portion of $\partial W(A)$.
\end{enumerate}
\end{theorem}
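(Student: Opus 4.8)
The plan is to reduce everything to a finite-dimensional perturbation problem on the top eigenspace of $\re(e^{-i\theta_0}A)$ and then read openness off the critical-curve data. After replacing $A$ by $e^{-i\theta_0}A$ I may assume $\theta_0=0$; write $\mu_0$ for the maximal eigenvalue of $\re A$, $E=\ker(\re A-\mu_0)$ (finite-dimensional by hypothesis), and let $\hat C$ be the compression of $\im A$ to $E$, i.e. $\hat C=P_E(\im A)P_E$. Since $z\in L_0$ is extreme, Lemma \ref{lem:supportlines} shows every preimage of $z$ is an eigenvector of $\re A$ for $\mu_0$, and for $x\in E$ one computes $f_A(x)=\mu_0+i\inner{\hat C x,x}$; hence $f_A^{-1}(z)=\SH\cap E^*$, where $E^*=\ker(\hat C-s^*)$ and $s^*=\im z$ must equal $\max\sigma(\hat C)$ because $z$ is extreme. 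By Proposition \ref{prop:Kato} the eigenvalue $\mu_0$ splits into finitely many analytic branches $\lambda_j(\theta)$ with analytic unit eigenvectors $x_j(\theta)$, and the critical curves through $z$ are exactly the curves given by \eqref{eq:critcurve} for those branches with $x_j(0)\in E^*$ (equivalently $\lambda_j'(0)=s^*$); two branches trace the same curve precisely when $x_j(0)$ are parallel. This dictionary is the backbone of the argument.

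For the sufficiency directions I will prove openness of $f_A$ at a well-chosen preimage $x$ by combining three facts about the image $f_A(U)$ of a small neighborhood $U$ of $x$ in $\SH$: it is convex and contains $\delta W(A)+(1-\delta)z$ for some $\delta>0$ (Proposition \ref{prop:scaling}); it contains the boundary arc $\{f_A(x_j(\theta)):|\theta|<\eta\}$ swept out by the analytic eigenvector family through $x$; and, being convex, it therefore contains the convex hull of these two pieces. When $\partial W(A)$ is analytic at $z$, the boundary eigenvector $x=x_j(0)$ sweeps the arc on both sides of $z$, and the convex hull of a two-sided boundary arc with the scaled copy of $W(A)$ fills a relative neighborhood of $z$. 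When $z$ is the endpoint of a flat portion, the arc is swept on one side, while the scaled copy already covers a sub-segment of the flat portion abutting $z$ on the other side, and convexity again closes the gap. This gives weak continuity in part (2); and when $z$ lies on a single critical curve it also gives strong continuity in part (1), since then either $\dim E^*=1$ and $x$ is the only preimage up to a scalar, or the maximal branch has constant multiplicity and the same openness argument applies verbatim at every $x\in\SH\cap E^*$.

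For the necessity directions I will use Lemma \ref{lem:open}: if $f_A$ is open at $x$, then $P(z_k)x\to x$ for every sequence of extreme points $z_k\to z$, where $P(z_k)$ projects onto $\spn f_A^{-1}(z_k)$. Given two distinct critical curves through $z$, at most one lies on $\partial W(A)$; choosing a preimage $x=x_i(0)$ not parallel to the limiting eigenvector of a boundary arc $c_\ell$ and taking extreme $z_k\to z$ along $c_\ell$, the ranges $\spn f_A^{-1}(z_k)$ converge to a subspace omitting $x$, so $P(z_k)x\not\to x$ and openness fails at $x$; this establishes the ``only if'' direction of part (1). For part (2) the remaining case is a genuine corner of two curved arcs: a point lying on two support lines is a common eigenvector of all $\re(e^{-i\theta}A)$ for $\theta$ in the supporting fan, hence an eigenvector of $A$, so $f_A^{-1}(z)=\SH\cap E^*$ consists of $A$-eigenvectors; the two arcs have non-parallel limiting eigenvectors $w_1,w_2\in E^*$, and every unit vector of $E^*$ fails to be parallel to at least one of $w_1,w_2$. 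Applying the previous argument along the arc whose limiting eigenvector is not parallel to the chosen preimage shows openness fails at every preimage, so weak continuity fails.

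The main obstacle I anticipate is the sufficiency step, i.e. proving that $f_A(U)$ genuinely contains a relative neighborhood of $z$. Because $z$ is a limit point of $\ext\cl{W(A)}$, Lemma \ref{lem:isolatedEP} says the scaled copy $\delta W(A)+(1-\delta)z$ alone never contains such a neighborhood, so the argument must quantitatively balance the curvature of the arc swept by $x_j(\theta)$ against the deficiency of the scaled copy near $z$, and making the convex-hull covering precise uniformly in the direction of approach to $z$ is the delicate point. A secondary difficulty is justifying the convergence of the eigenspaces $\spn f_A^{-1}(z_k)$ in the necessity step when the maximal eigenvalue along the approaching arc has multiplicity greater than one, which again relies on the analyticity of the spectral projections in Proposition \ref{prop:Kato}.
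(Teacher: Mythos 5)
Your overall skeleton --- reduction to the top eigenspace $E$ of $\re(e^{-i\theta_0}A)$ with the compression $\hat C=P_E(\im A)P_E$, sufficiency via Proposition \ref{prop:scaling} (convexity plus the scaled copy $\delta W(A)+(1-\delta)z$) combined with the arc swept out by an analytic eigenvector family, and necessity via Lemma \ref{lem:open} applied to extreme points marching along a boundary critical curve --- is essentially the paper's proof. Two of your anticipated obstacles are in fact non-issues: no ``quantitative balancing of curvature'' is needed, because once $f_A(U)$ is convex and contains a two-sided boundary arc through $z$ (or a one-sided arc together with a sub-segment of the flat portion, which the scaled copy supplies), it contains the piece of $W(A)$ cut off by the chord joining the endpoints of that boundary set, and that piece is a relative neighborhood of $z$; and the convergence of the spans $\spn f_A^{-1}(z_k)$ is exactly the analyticity of the spectral projections from Proposition \ref{prop:Kato}, which the paper invokes in the same way.

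The genuine gap is in your necessity argument for part (2). You assert that the remaining case is ``a genuine corner of two curved arcs,'' i.e.\ that $z$ lies on two support lines, and deduce that every pre-image of $z$ is an eigenvector of $A$. That characterization is false: when two distinct analytic eigenvalue branches $\lambda,\mu$ of $\re(e^{-i\theta}A)$ satisfy $\lambda(\theta_0)=\mu(\theta_0)$ and $\lambda'(\theta_0)=\mu'(\theta_0)$ but differ at higher order, the two critical arcs meet at $z$ \emph{tangentially}; $\partial W(A)$ is then differentiable but not analytic at $z$, the support line at $z$ is unique, and pre-images of $z$ need not be eigenvectors of $A$. This is not an exotic degeneracy --- it is the typical instance of case III and the one in the paper's own $4\times 4$ example from \cite{CJKLS2}, where $W(A)$ is the hull of two ellipses meeting tangentially at $0$, weak continuity fails at $0$, and yet $0$ is not an eigenvalue of that matrix. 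So your chain ``two support lines $\Rightarrow$ common eigenvector of all $\re(e^{-i\theta}A)$ in the fan $\Rightarrow$ eigenvector of $A$'' never gets started in precisely the situation the theorem is about. Fortunately the eigenvector claim is not needed: as in the paper, the two boundary branches have spectral projections $P(\theta),Q(\theta)$ whose ranges at $\theta_0$ are mutually orthogonal subspaces of $E^*$, so no pre-image $x\in\SH\cap E^*$ can lie in both $\range P(\theta_0)$ and $\range Q(\theta_0)$; following extreme points along the arc whose limiting eigen\emph{space} does not contain $x$ --- note that your ``non-parallel to the limiting eigenvector'' test is the wrong criterion whenever that eigenspace has dimension greater than one, an issue that also touches your part (1) necessity argument --- Lemma \ref{lem:open} defeats openness at $x$, and since $x$ was an arbitrary pre-image, weak continuity fails.
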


\begin{proof}
Since $z \in L_{\theta_0}$, $\re(e^{-i \theta_0} z)$ is the maximum eigenvalue of $\re(e^{-i\theta_0}A)$.  %If $x \in f_A^{-1}(z)$, then $x$ must be an eigenvector of $\re(e^{-i\theta_0}A)$ corresponding to this maximum eigenvalue. 
If we perturb the angle $\theta$, the maximum eigenvalue of $\re(e^{-i \theta} A)$ at $\theta = \theta_0$ may split into one or more eigenvalue functions $\lambda(\theta)$ that are analytic in a neighborhood of $\theta_0$. Each of these eigenvalue functions corresponds to a critical curve given by $\lambda(\theta) + i \lambda'(\theta)$. If all of the eigenvalue functions $\lambda(\theta)$ have the same slope at $\theta_0$, then $z$ is the unique point where $L_{\theta_0}$ intersects $W(A)$.  If the eigenvalue functions have different slopes at $\theta_0$, then the intersection of $L_{\theta_0}$ with $W(A)$ will be a flat portion.  In that case, $z$ must be an endpoint of the flat portion, since we have assumed that $z \in \ext W(A)$.  
From here, we divide the proof into three cases.

\begin{enumerate}
\item[I.] $z$ is in the relative interior of a curved analytic arc of $\partial W(A)$.  
\item[II.] $z$ is a singularity where one curved analytic arc of $\partial W(A)$ transitions to a flat portion of the boundary.  
\item[III.] $z$ is a singularity where one curved analytic arc of $\partial W(A)$ transitions to another.  
\end{enumerate}

We will show that $f_A^{-1}$ is weakly continuous at $z$ in cases I and II, while weak continuity fails in case III. We begin with case I. Suppose that $z$ is contained in the relative interior of one of the analytic curves defining the boundary of $W(A)$. Any such curve will be a critical curve of $A$. This critical curve can be expressed as $f_A(x(\theta))$ for some analytic family of eigenvectors $x(\theta)$ of $\re(e^{-i \theta}A)$. Then $z = f_A(x_0)$ where $x_0 = x(\theta_0)$.  If we take a neighborhood $U$ around $x_0$ in $\SH$, and consider $f_A(U)$, then by Proposition \ref{prop:scaling}, $f_A(U)$ is a convex subset of $W(A)$ that contains a neighborhood of the boundary around $z$.  Therefore, it contains a neighborhood of $z$ in the relative topology of $W(A)$, proving that $f_A^{-1}$ is weakly continuous at $z$.  

Now consider case II, where $z$ is the transition from a flat portion to a curved analytic portion of the boundary.  The curved portion is one of the critical curves of $A$, and can be parameterized by $f_A(x(\theta))$ for some analytic family of eigenvectors of $\re(e^{-i \theta} A)$.  Again $z = f_A(x_0)$ where $x_0 = x(\theta_0)$.  Let $U$ be a neighborhood of $x_0$ in $\SH$.  The image $f_A(U)$ contains a neighborhood of $z$ on the curved portion of the boundary.  It is also a convex set that contains all points of $W(A)$ in a neighborhood of $z$ on the flat portion of the boundary by Proposition \ref{prop:scaling}.  We conclude that $f_A(U)$ contains a neighborhood of $z$ in the relative topology of $W(A)$ and therefore $f_A^{-1}$ is weakly continuous at $z$.  

It remains to prove that weak continuity fails in case III, that is, when $z$ is the transition point for two different curved analytic portions of the boundary of $W(A)$.  The two boundary portions adjacent to $z$ will be given by distinct critical curves of $A$.  Let $\lambda(\theta)$ and $\mu(\theta)$ denote the analytic eigenvalue functions of $\re(e^{-i \theta}A)$ corresponding to the two critical curves and let $P(\theta)$ and $Q(\theta)$ be their respective spectral projections. Choose any $x \in f_A^{-1}(z)$. Since $x$ cannot be in the range of both $P(\theta)$ and $Q(\theta)$, Lemma \ref{lem:open} implies that the map $f_A$ is not open at $x$. Therefore $f_A^{-1}$ is not weakly continuous at $z$.

We have completed the characterization of weak continuity, but we still need to verify the conditions for strong continuity to hold in cases I and II. Suppose there is only one critical curve that passes through $z$. Let $P(\theta)$ denote the spectral projection corresponding to this critical curve. If $x \in f_A^{-1}(z)$, then $x$ is in the range of $P(\theta_0)$. Choose an analytic path $\varphi(\theta)$ in $\SH$ such that $\varphi(\theta_0) = x$ and $P(\theta) \varphi(\theta) \ne 0$ for all $\theta$ in a neighborhood of $\theta_0$.  Then $x(\theta)$ given by \eqref{eq:evectors} is an analytic family of unit eigenvectors of $\re(e^{-i \theta}A)$, and the curve $f_A(x(\theta))$ is the unique critical curve passing through $z$.   

Choose a neighborhood $U = \{y \in \SH : \|y-x\| \le \epsilon\}$ around $x$. Since the eigenvectors $x(\theta)$ depend continuously on $\theta$, the image $f_A(U)$ contains a neighborhood of $z$ in the critical curve that passes through $z$. If $z$ happens to be the endpoint of a flat portion of $\partial W(A)$, then $f_A(U)$ also contains a neighborhood of $z$ in that flat portion by Proposition \ref{prop:scaling}. Therefore $f_A(U)$ contains a neighborhood of $z$ on the boundary of $W(A)$. Since $f_A(U)$ is convex by Proposition \ref{prop:scaling}, we conclude that $f_A(U)$ contains a neighborhood of $z$ in the relative topology of $W(A)$.  This proves that $f_A^{-1}$ is strongly continuous at $z$.  

Conversely, suppose that more than one critical curve contains $z$. In case I, one of these critical curves parameterizes $\partial W(A)$ in a neighborhood of $z$, while in case II, one of the critical curves parameterizes $\partial W(A)$ to one side of $z$, while the portion of the boundary on the other side of $z$ is flat. In any event, each of these critical curves corresponds to an eigenvalue function that is analytic in a neighborhood of $\theta_0$.  Let $\mu(\theta)$ denote the eigenvalue function corresponding to the critical curve that is part of the boundary near $z$. Let $\lambda(\theta)$ be the eigenvalue function corresponding to one of the other critical curves, and let $P(\theta)$ and $Q(\theta)$ denote the analytic families of spectral projections corresponding to $\lambda(\theta)$ and $\mu(\theta)$, respectively.  Since $P(\theta)$ and $Q(\theta)$ correspond to different spectral subspaces of the self-adjoint operator $\re(e^{-i \theta}A)$, their ranges are orthogonal. We may choose a pre-image $x \in f_A^{-1}(z)$ such that $x$ is in the range of $P(\theta_0)$, that is $P(\theta_0) x = x$. Then $Q(\theta) x \rightarrow 0$ as $\theta \rightarrow \theta_0$. By Lemma \ref{lem:open}, $f_A$ is not open at $x$, and therefore $f_A^{-1}$ is not strongly continuous at $z$.  
\end{proof}

The next result is an immediate consequence of Proposition \ref{prop:bdry} and Theorem \ref{thm:compactchar}.  

\begin{corollary} \label{thm:finite}
Let $A \in \BH$. If $W_e(A) \cap  \partial W(A) = \varnothing$, then there are at most finitely many points where strong (and thus weak) inverse continuity of $f_A^{-1}$ can fail. 
\end{corollary}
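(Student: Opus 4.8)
The plan is to locate the possible failure points of strong continuity on $\partial W(A)$, show that there are only finitely many of them, and then deduce the statement for weak continuity for free: weak continuity is the weaker property, so if $f_A$ fails to be open at \emph{every} preimage of $z$ then in particular it fails to be open at all of them, and hence the set of points at which weak continuity fails is contained in the set at which strong continuity fails. First I would dispose of the easy points using Theorem \ref{thm:main}. An interior point of $W(A)$ has a neighborhood lying inside the interior and so is not a limit point of $\ext \cl{W(A)}$, while a point in the relative interior of a flat portion of $\partial W(A)$ has a boundary neighborhood consisting of non-extreme points and is likewise not such a limit point; in both situations Theorem \ref{thm:main} supplies strong continuity. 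Thus strong continuity can only fail at junction points of $\partial W(A)$ or in the relative interior of its curved arcs.

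Next I would invoke Proposition \ref{prop:bdry}. Since $W_e(A) \cap \partial W(A) = \varnothing$, the boundary $\partial W(A)$ is a finite union of analytic curves, so there are only finitely many junction points where two analytic pieces meet. Each such point $z$ lies in $W(A) \cap \ext \cl{W(A)}$ and, being separated from $W_e(A)$, sits on a support line $L_{\theta_0}$ where the maximum of the spectrum of $\re(e^{-i\theta_0}A)$ is an isolated eigenvalue of finite multiplicity by Lemma \ref{lem:M}. Hence Theorem \ref{thm:compactchar} applies at every junction point, and these contribute at most finitely many failures.

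It remains to bound the failures in the relative interior of a single curved arc $\Gamma$, and this is the main obstacle. Such an arc is the critical curve of the maximal eigenvalue function $\mu(\theta)$ of $\re(e^{-i\theta}A)$, which is analytic and isolated with finite multiplicity on the corresponding closed $\theta$-interval. By Theorem \ref{thm:compactchar}(1), strong continuity fails at a point $z = z(\theta_0) \in \Gamma$ exactly when $z$ lies on a second critical curve; since the boundary is smooth at $z$ with unique support line $L_{\theta_0}$, this happens precisely when a distinct analytic eigenvalue branch $\lambda(\theta)$ meets $\mu$ at $\theta_0$ with matching value and slope (so that both critical curves $e^{i\theta}(\mu+i\mu')$ and $e^{i\theta}(\lambda+i\lambda')$ pass through $z$; the degenerate case in which $z$ is an eigenvalue of $A$ on $\Gamma$ corresponds to the branch $\lambda(\theta)=\re(e^{-i\theta}z)$ and is included here). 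For a fixed branch $\lambda \ne \mu$ the nonnegative analytic function $\mu - \lambda$ has a zero of order at least two at any such $\theta_0$, and its zeros are isolated; the real difficulty is to preclude accumulation arising from infinitely many different branches. Here I would use the hypothesis once more: $W_e(A) \cap \partial W(A) = \varnothing$ forces a uniform gap $\gamma > 0$ between $\mu(\theta)$ and the essential spectrum of $\re(e^{-i\theta}A)$ over the compact arc, so the spectral projection associated with the part of the spectrum in $[\mu(\theta) - \gamma/2, +\infty)$ has finite, uniformly bounded rank, and by the analytic perturbation theory of Proposition \ref{prop:Kato} the eigenvalues in this band are described by finitely many analytic functions on the compact interval. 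Two distinct analytic functions coincide only finitely often on a compact interval, so $\mu$ meets these finitely many lower branches at finitely many parameters, yielding finitely many failure points on $\Gamma$. Summing over the finitely many curved arcs and adding the finitely many junction points gives the desired bound, and the statement for weak continuity follows as observed above.
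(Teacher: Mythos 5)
Your proof is correct, and it runs along the same lines as the paper's: the paper simply declares the corollary an ``immediate consequence'' of Proposition \ref{prop:bdry} and Theorem \ref{thm:compactchar}, with no further argument, and those are precisely the two pillars of your proof (together with Theorem \ref{thm:main} and Lemma \ref{lem:M} to see that Theorem \ref{thm:compactchar} is applicable at every relevant boundary point). What you add is the one step that is genuinely \emph{not} immediate, and which the paper leaves tacit: ruling out infinitely many strong-continuity failures in the relative interior of a single curved arc, where a priori infinitely many distinct eigenvalue branches could each touch the maximal eigenvalue function $\mu(\theta)$ once. Your resolution is sound: $W_e(A)\cap\partial W(A)=\varnothing$ gives $\operatorname{dist}(W_e(A),\partial W(A))>0$, hence a uniform gap $\gamma>0$ between $\mu(\theta)$ and the maximum of the essential spectrum of $\re(e^{-i\theta}A)$ (the latter being the support function of $W_e(A)$, exactly as in the proof of Lemma \ref{lem:M}); consequently only finitely many analytic branches can come within $\gamma/2$ of $\mu$ near any given $\theta_0$, each distinct branch $\lambda$ meets $\mu$ only at the isolated zeros of the nonzero analytic function $\mu-\lambda$, and compactness of the parameter interval finishes the count. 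Two refinements are worth noting. First, since $\lambda\le\mu$ pointwise, any contact point is a minimum of $\mu-\lambda$, so the slope-matching condition you impose is automatic rather than an extra constraint. Second, eigenvalue branches can enter and leave the band $[\mu(\theta)-\gamma/2,\infty)$, so they constitute finitely many analytic functions only \emph{locally}; the correct phrasing is local finiteness of the contact parameters followed by a finite subcover of the compact interval, not ``finitely many analytic functions on the compact interval.'' Finally, your observation that a second critical curve meeting $z$ at a different angle forces $z$ to be an eigenvalue of $A$, which is then covered by the constant branch $\lambda(\theta)=\re(e^{-i\theta}z)$, correctly disposes of a degenerate case that the paper never mentions.
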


\begin{remark}
For any compact operator $A$ defined on an infinite dimensional Hilbert space, $W_e(A) = \{0\}$.  
If $0 \notin \partial W(A)$, then Proposition \ref{prop:bdry} implies that the boundary of $W(A)$ is a finite union of critical curves, and Theorem \ref{thm:compactchar} gives a complete description of when weak and strong continuity hold for the inverse numerical range map on the boundary.  
It is not clear what the necessary and sufficient conditions are for $f_A^{-1}$ to be strongly or weakly continuous at 0 when $0 \in \partial W(A) \cap W(A)$.  It is also not clear what happens at the opposite end point of a flat portion of the boundary of $W(A)$ that also contains 0.  These open questions are related to the possible structure of the boundary of the numerical range of a compact operator near the origin.  
\end{remark}

\section{Examples}

\subsection*{Normal operators}
\begin{example}
 Let $e_k$, $k \in \Z$, denote the standard orthonormal basis for $\mathcal{H}=\ell_2(\Z)$ and consider the compact normal operator $A:\ell_2 \rightarrow \ell_2$ defined by
$$A(e_k) := \begin{cases} 
0 & \text{ if } k = 0,\\
\frac{1}{k} + i \frac{1}{k^2} & \text{otherwise.}
\end{cases}$$
Since $0 \in \ext \cl{W(A)}$ and 0 is not an isolated element of $\ext \cl{W(A)}$, $f_A^{-1}$ is not weakly continuous at 0 by Theorem \ref{thm:normal}.  
\end{example}

\begin{example}
Let $T:\ell_2(\N) \rightarrow \ell_2(N)$ be the normal operator $(T(x))_k = \tau^k x_k$ where $\tau$ is an irrational root of unity.  Then $W(T)$ is the union of the open unit disk with the set $\{\tau^k : k \in \N \}$.  Each $\tau_k$ is an extreme point of $\cl{W(T)}$, and none of these extreme points is isolated.  By Theorem \ref{thm:normal}, $f_T^{-1}$ is not weakly continuous at any of these extreme points.  %After all, if $\|x\| =1$ and $|\inner{ x, Tx }| = 1$, then $x$ is an eigenvector of $T$ and therefore must have eigenvalue $\tau^k$ for some $k$.  It follows that $f_T(x) = \inner{ x,Tx } = \inner{ x,\tau^k x } = \tau^k$.  

%The preimage of each $\tau^k \in W(T)$ under $f_T$ is the corresponding standard basis vector (up to scalar multiplication) as these are the unique eigenvectors corresponding to each $\tau^k$. That map $f_T$ is not open at any of these standard basis vectors, as the image of a small neighborhood around one cannot contain the image of any of the others, but these images are dense on the boundary of the unit circle.  Therefore there are infinitely many points where $f_T^{-1}$ is not weakly continuous.  
\end{example}

\subsection*{Non-normal compact operators}
\begin{example}
The numerical range of the 4-by-4 matrix 
$$A = \begin{bmatrix}
0 & ik & 0 & 0 \\
ik & ib & 0 & 0 \\
0 & 0 & 0 & ik \\ 
0 & 0 & ik & -ib 
\end{bmatrix},$$
where $b,k > 0$, is the convex hull of two ellipses and weak continuity fails for $f_A^{-1}$ at 0 \cite[Example 9]{CJKLS2}.  We can choose $b$ and $k$ small enough so that the numerical range of $I_4 - A$ is contained in the unit circle. Here we use $I_4$ to denote the identity matrix on $\C^4$ while $I$ will denote the identity on the Hilbert space $\ell_2(\N)$. We then consider the operator 
$$T = -I + \bigoplus_{k=1}^\infty \left(I_4 - \tfrac{1}{k}A \right) e^{i\pi/k}.$$
Observe that $T$ is a compact operator on $ \ell_2(N)$.  Weak continuity of $f_T^{-1}$ fails at each of the points $e^{i \pi/k}-1$, $k \in \N$ by Theorem \ref{thm:compactchar}. So $T$ is an example of a compact operator with infinitely many weak continuity failures.  
\end{example}

\begin{example}
Let $\mathcal{H} = L^2(0,1)$.  The Volterra operator $V: \mathcal{H} \rightarrow \mathcal{H}$ is 
$$(Vf)(t) := \int_0^t f(s) \, ds.$$
It is well known that the Volterra operator is a compact linear operator. Halmos points out \cite[Problem 150]{Hal67} (see also \cite[Example 9.3.13]{Dav07}) that the numerical range of $V$ is the closed set $W(V)$ lying between the curves $$t \mapsto \frac{1-\cos(t)}{t^2} \pm i \frac{t- \sin t}{t^2}, ~~~~ 0 \le t \le 2\pi,$$
where the values at $t = 0, 2\pi$ are taken to be the corresponding limits. There is a flat portion on the imaginary axis with endpoints $i/(2\pi)$ to $-i/(2\pi)$. We will show that the inverse numerical range map is strongly continuous everywhere on $W(V)$.   

\begin{figure}[ht]
\begin{center}
\includegraphics[scale=0.2]{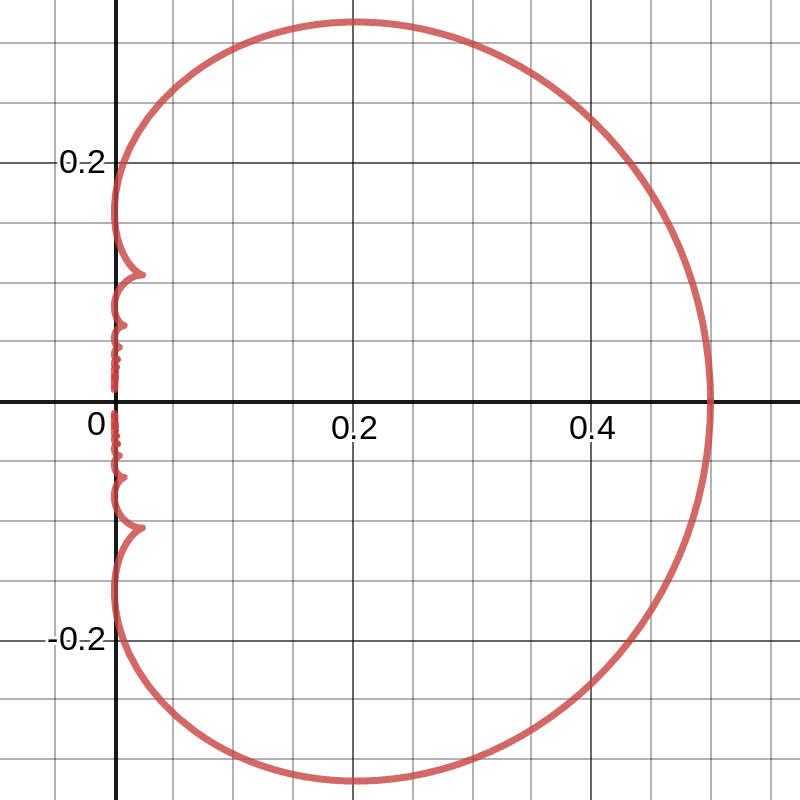}
\end{center}
\caption{The critical curves of the Volterra operator.}
\label{fig:Volterra}
\end{figure}

Let us review some facts about the Volterra operator, see \cite[Example 9.3.13]{Dav07} for details. The adjoint of $V$ is $(V^*f)(t) = \int_t^1 f(s) \, ds$. Let $V_\theta$ denote the real part of $e^{-i \theta} V$ and note that  
$$V_\theta = \tfrac{1}{2} ( e^{-i \theta} V + e^{i \theta} V^* ).$$
As long as $\theta \ne 0$ or $\pi$, the eigenvalues and corresponding unit eigenvectors of $V_\theta$ are 
$$\lambda_n = \frac{\sin \theta}{ 2\theta + 2n \pi}, ~ f_n(t) = e^{-i t (2 \theta + 2n \pi)} \text{ where } n \in \Z.$$
In particular, each eigenvalue has a one dimensional eigenspace and so the critical curves corresponding to each eigenvalue are well defined (See Figure \ref{fig:Volterra}).  The boundary of the numerical range is the critical curve corresponding to the maximal eigenvalue and therefore the inverse numerical range map is strongly continuous at all points on the boundary curve, except possibly the two endpoints by Theorem \ref{thm:compactchar}. Strong continuity also holds at all points in the relative interior of the flat portion of the boundary by Theorem \ref{thm:main}.  All that remains is to verify that $f_V^{-1}$ is strongly continuous at the endpoints of the flat portion, $\pm i/(2\pi)$.  

The real part of $V$ is the rank one orthogonal projection onto the constant function.  Let $\mathcal{H}_0 = \{f \in \mathcal{H} : \int_0^1 f(t) \, dt = 0 \}$.  The compression of $V$ onto $\mathcal{H}_0$ is a normal operator, and the functions $e^{-2\pi i n t}$, $n \in \Z \backslash \{0 \}$ are an orthonormal basis of eigenvectors with corresponding eigenvalues $i/(2\pi n)$.  

The boundary curve of $W(V)$ can be parameterized by $f_V(e^{-2it \theta})$ for $\theta \in [\pi,-\pi]$.  The functions $e^{-2it \theta}$ are the the unique pre-images of corresponding points on the boundary curve. This is true, even at the endpoints where $\theta = \pm \pi$.  Because the image of $f_V$ at $e^{\mp 2 \pi i t}$ contains a neighborhood of the boundary of $W(V)$ around $\pm i/(2\pi)$, it follows that $f_V$ is strongly continuous at both $\pm i/(2\pi)$.  

%Suppose that $f$ is a unit eigenvector of $V_\theta$ for $\theta \neq 0, \pi$.  Then $V_\theta f = \lambda f$ for some $\lambda \in \R$ and 
%$$\lambda f'(x) = \tfrac{1}{2} e^{-i \theta} f(x) - \tfrac{1}{2} e^{i \theta} f(x) = -i\sin \theta f(x).$$ 
%This means that the eigenvectors of $V_\theta$ have the form $f(x) = e^{-i x \sin \theta/\lambda}$ when $\lambda \ne 0$.   
  
%It is easy to see that these two curves can be extended to functions that are analytic on all of $\R$.  \hl{Are you sure that these are all critical curves, thought?  And if so, is it worth mentioning? There are some interesting questions here: for example... if an analytic curve extends analytically to all of R, does it follow that the critical curves also extend too?}
\end{example}

\subsection*{Weighted shift operators}
\begin{example}
The numerical range of a weighted shift operator is either an open or closed circular disk centered at the origin \cite[Proposition 16]{Shi74}. Conditions for determining whether the disk is closed or open can be found in \cite{St83}. We will demonstrate that the inverse numerical range map $f_A^{-1}$ of any weighted shift operator $A$ is strongly continuous everywhere in $W(A)$.   

Let $A$ be a weighted shift operator on $\ell_2(\Z)$ such that there is a bounded sequence of scalars $\alpha_k \in \C$ for which $(Ax)_{k+1} = \alpha_k x_k$.  If $W(A)$ is open, then $f_A^{-1}$ is strongly continuous on $W(A)$ by Theorem \ref{thm:main}. Suppose therefore that $W(A)$ is closed and choose $x \in \ell_2(\Z)$ with $\|x\|=1$ such that $\inner{ Ax,x }$ is equal to the numerical radius $\omega(A) := \sup \{|\inner{Ay,y}| : y \in \ell_2(\Z), \|y \|=1 \}$.  Fix $\tau \in \C$ with $|\tau| = 1$.  Let $y \in \ell_2$ be defined by $y_k = \tau^k x_k$.  Note that $f_A(y) = \inner{ Ay,y } = \sum_k \alpha_k \tau^{k+1} \bar{\tau}^{k} = \tau \inner{ Ax,x}$. Fix $\epsilon > 0$ and note that 
$$\|x-y\|^2 = \sum_{k \in \Z} |\tau^k -1|^2 |x_k|^2 \le \sum_{-N \le k \le N} |\tau^k-1|^2 + \tfrac{1}{2}\epsilon$$
for some $N$ sufficiently large. When $\tau$ is sufficiently close to 1, $|\tau^k-1|^2 \le \frac{\epsilon}{2(2N+1)}$ for all $k \in \{-N,\ldots,N\}$. In that case, $\|x-y\|^2 \le \epsilon.$
Therefore the map $f_A$ is relatively open at $x$ since the image of a neighborhood of $x$ contains a neighborhood of $\omega(A)$ on the boundary of $W(A)$.  It follows that $f_A^{-1}$ is strongly continuous at $\omega(A)$. By rotational symmetry, $f_A^{-1}$ is strongly continuous at all points of the boundary of $W(A)$ that are part of the numerical range.  It is worth mentioning that there are weighted shift operators where $W_e(A) = W(A)$ and for such operators the strong continuity of $f_A^{-1}$ on the boundary cannot be derived from Theorem \ref{thm:compactchar}. See \cite[Note V.4]{St83} for details on the construction of such examples.  
\end{example}

\bibliography{master}
\bibliographystyle{plain}

\end{document}